\numberwithin{equation}{section}
\newcommand{\C}{{\mathbb C}}
\newcommand{\D}{{\mathbb D}}
\newcommand{\N}{{\mathbb N}}
\newcommand{\Z}{{\mathbb Z}}
\newcommand{\clos}{{\operatorname{clos}}}
\newcommand{\spn}{{\operatorname{span}}}
\newcommand{\dens}{{\operatorname{dens}}}
\newcommand{\eps}{\varepsilon}
\newcommand{\im}{\operatorname{Im}}
\newcommand{\re}{\operatorname{Re}}
\newcommand{\df}{\buildrel\mathrm{def}\over=}
\newtheorem{theorem}{Theorem}
\newtheorem{lemma}[theorem]{Lemma}
\newtheorem*{conjecture}{Conjecture}
\numberwithin{equation}{section}
\begin{document}

\title[Invertibility threshold for $H^\infty$ trace algebras]
{Invertibility threshold \\ for $H^\infty$ trace algebras, \\ and effective
matrix inversions}

\dedicatory{ Dedicated to the memory of M.~S.~Birman, \\
from whom both of us were learned a lot \textup(and not only
mathematics\textup)}

\author{Nikolai Nikolski}
\address{University Bordeaux1/V.A. Steklov. Math. Inst., St. Petersburg}
\email{Nikolai.Nikolski@math.u-bordeaux1.fr}

\author{Vasily Vasyunin}
\address{St. Petersburg Branch of the V.A. Steklov. Math. Inst.
\\
Fontanka 27, St. Petersburg, 191023, Russia} \email{vasyunin@pdmi.ras.ru}

\thanks{V. Vasyunin's research was supported in part by RFBR (grant 08-01-00723).}
\thanks{N. Nikolski's research was partially supported by the French ANR Projects DYNOP and FRAB}

%\subjclass[2000]{Primary chto syuda?}

\keywords{Effective inversions $H^\infty$ trace algebra, invisible spectrum, critical constant,
interpolation Blaschke product, Bourgain--Tzafriri restricted invertibility conjecture}

\date{September 12, 2010}

\begin{abstract}
For a given $\delta$, $0<\delta<1$, a Blaschke sequence $\sigma=\{\lambda_j\}$
is constructed such that every function $f$, $f\in H^\infty$, having
$\delta<\delta_f=\inf_{\lambda\in\sigma}|f(\lambda)|\le\|f\|_\infty\le1$ is
invertible in the trace algebra $H^\infty|\sigma$ (with a norm estimate of the
inverse depending on $\delta_f$ only), but there exists $f$ with
$\delta=\delta_f\le\|f\|_\infty\le1$, which does not. As an application, a
counterexample to a stronger form of the Bourgain--Tzafriri restricted
invertibility conjecture for bounded operators is exhibited, where an
``orthogonal (or unconditional) basis'' is replaced by a ``summation block
orthogonal basis''.
\end{abstract}
\maketitle

\section{Introduction}

The paper deals with a numerical control
of inverses (condition numbers) for functions $T=f(A)$ of
large matrices in terms of the lower spectral parameter
$$
\delta =\delta(T)=\min|\lambda_j(T)|
$$
Precisely, our problem is the following. Given a sequence
$\sigma=\{\lambda_j\}$ in the unit disk $\D=\{z\in\C\colon|z|<1\}$ of the
complex plain, we consider all normalized matrices $A$, $\|A\|\le1$ (or Hilbert
space operators) such that $\sigma(A)\subset\sigma$ (counting multiplicities)
and look for a numerical function $c(\delta)=c(\delta,\sigma)$ bounding the
inverses
$$
\|\,T^{-1}\|\le c(\delta)
$$
for all $T=f(A)$ having $\delta\le|\lambda_j(T)|\le\|T\|\le1$, where
$\lambda_j(T)$ mean eigenvalues of $T=f(A)$. The best possible upper bound
$c(\delta)$ is called $c_1(\delta)=c_1(\delta,\sigma)$,
\begin{align*}
c_1&(\delta,\sigma)=
\\
&\sup\big\{\|T^{-1}\|\colon T=f(A), \delta\le|\lambda_j(T)|\le\|T\|\le1,\
\sigma(A)\subset\sigma,\ \|A\|\le1\big\}\,.
\end{align*}
Here $f$ can be a polynomial (if $A$ is a finite matrix) or an $H^\infty$
function (if $A$ is a Hilbert space contraction). Recall that
$$
H^\infty=\left\{f\colon f\text{ holomorphic on }\D\text{ and }
\|f\|_\infty\!=\sup_{z\in\D}|f(z)|<\infty\right\}\,.
$$

Since $\delta\mapsto c_1(\delta,\sigma)$, $0<\delta<1$, is a decreasing
function, we can define a {\it critical constant} (or, an {\it invertibility
threshold}) $\delta_1=\delta_1(\sigma)$, $0\le\delta_1\le1$, by the following
properties
\begin{align*}
0<\delta<\delta_1&\implies c_1(\delta)=\infty\,,
\\
\delta_1<\delta\le1&\implies c_1(\delta)<\infty\,.
\end{align*}
The number $\delta_1$ can be considered as a threshold of bounded invertibility
or as a threshold for an operator algebra to be {\it inverse closed\/}:
operators $T$ from our collection with a ``{\it scattered\/}'' spectral data
(i.~e., $\inf_j|\lambda_j(T)|<\delta_1$, $\|T\|=1$) are, in general, not
invertible, whereas those with ``{\it flat\/}\,'' spectral data
$\delta_1<\delta\le|\lambda_j(T)|\le\|T\|\le1$ are invertible.

The principal result of this paper is a construction of a Blaschke sequence
$\sigma$ with a given in advance value of the critical constant
$\delta_1(\sigma)=\delta_1$, $0\le\delta_1\le1$ (Section 2 below).

The case, where $\delta_1=0$, was considered in~[GMN]; moreover, the paper
quoted contains necessary and sufficient conditions for $\delta_1(\sigma)=0$,
which reduces to the so-called Weak (Carleson) Embedding Property (WEP). See
the statement of the result at the end of this Introduction.

It is worth mentioning that, strictly speaking, the properties of a function
algebra $A$ on a set $\sigma$ to be inverse closed (i.~e., the property $f\in
A$, $\inf_{z\in\sigma}|f(z)|>0\implies1/f\in A$) does not imply that
$\delta_1(\sigma,A)=0$ (this fact was already mentioned in~\cite{GMN}; the
constants $c_1(\sigma,A)$ and $\delta_1(\sigma,A)$ are defined for an algebra
$A$ in a similar way). Indeed, for an arbitrary Blaschke sequence
$\sigma=\{\lambda_j\}$, the trace algebra $A=C_a(\D)|\sigma$ of the disk
algebra $C_a(\D)= H^\infty\cap C(\overline{\D})$, is always inverse closed,
whereas $c_1(\delta,C_a(\D)|\sigma)=c_1(\delta,H^\infty|\sigma)$ for every
$\delta$, $0<\delta<1$, and hence $\delta_1(\sigma,C_a(\D)|\sigma)=
\delta_1(\sigma,H^\infty|\sigma)$, but the algebra $H^\infty|\sigma$ can be not
inverse closed (i.~e. possibly $\delta_1(\sigma,C_a(\D)|\sigma)>0$). These
properties are shown in~\cite{GMN}.

The constant $c_1(\delta,\sigma)$ has a meaning of ``the best estimate for the
worst case'' when bounding inverse matrices in terms of the lower spectral
parameter $\delta$. Moreover, we can describe it in two more ways, at least in
the case of the ``simple spectrum'' (the points $\lambda_j$ of the sequence
$\sigma$ are pairwise different). First, it is the optimal upper bound for
inverses in the {\it trace algebra}
$$
H^\infty|\sigma =\left\{a\colon\sigma\to\C\colon\exists f\in H^\infty \text{
such that }a=f|\sigma\right\}
$$
endowed with the trace norm $\|a\|=\inf\{\|f\|_\infty\colon a=f|\sigma\}$.
Examples of such algebras with a given threshold $\delta_1$ of the bounded
invertibility (Section 2 below) are, probably, of interest for the $ H^\infty$
interpolation theory.

Secondly, in the definition of $c_1(\delta,\sigma)$, we can restrict ourselves
to a just one (``the worst'') contraction $A$ and the algebra generated by
$H^\infty$ functions of it. This is the so-called {\it model contraction}
$M_B$, which can be defined as follows. Given a Blaschke product $B=B_\sigma$
$$
B_\sigma=\prod_{j\geq 1}b_{\lambda_j}\,,
$$
where $b_\lambda=\frac{\lambda-z}{1-\overline\lambda z}\cdot\frac{|\lambda|}
\lambda$, $\lambda\in\D$, and $\sigma=\{\lambda_j\}$, $\sum_j(1-|\lambda_j|)<
\infty$ (the Blaschke condition), we set
$$
M^*_Bf=\frac{f-f(0)}z\,,\qquad f\in K_B\,,
$$
where $K_B=H^2\ominus BH^2$ (the orthogonal complement of $BH^2$ in $H^2$) and
$H^2$ stands for the standard Hardy space of the disk,
$$
H^2=\Big\{f=\sum_{k\ge0}a_kz^k\colon\sum_{k\ge0}|a_{k}|^2=
\|f\|_2^2<\infty\Big\}\,.
$$
It is well known (and easy to verify, see~\cite{Nik1}, \cite{Nik2}) that
\begin{gather*}
M_BK_B\subset K_B,
\\
\|M_B\|=1,
\\
\text{and}\quad \sigma(M_B)=\clos\{\lambda_j\colon j=1,2,\dots\}.
\end{gather*}
Moreover, $\|M^{-1}_B\|=1/B(0)$. It is also known that for every matrix $A$
with $\|A\|\le1$ and $\sigma(A)\subset\sigma$, one has $\|f(A)\|\le\| f(M_B)\|$
for every function $f$. This entails that the question on the invertibility and
the norm control of inverses can be reduced to functions $f(M_B)$ of the model
operator only, and inverse $f(M_B)^{-1}$, if it exists, is again a
$H^\infty$-function of $M_B$.

The above discussion easily implies the following.

\par (1) If the set $\{\lambda\in\sigma\colon\delta\le|\lambda|\le\delta'\}$ is
infinite for some $0<\delta\le\delta '<1$, then $c_1(\delta,\sigma)=\infty$.
\par (2) If $\sigma$ is a sequence tending to the unit circle (i.~e.,
$\{\lambda\in\sigma\colon|\lambda|\le\delta\}$ is finite for every $\delta<1$)
and $\sum_{\lambda\in\sigma}(1-|\lambda|)=\infty$, then
$c_1(\delta,\sigma)=\infty$ for every $\delta $, $0<\delta<1$.

These properties show that, in fact, {\it the Blaschke condition is necessary
in order our questions} (to find or to estimate $c_1(\delta,\sigma)$ and
$\delta_1(\sigma)$) {\it to be nontrivial}. In what follows we always assume
this property (if the converse does not stated explicitly). Now, we can give
the following expression for $c_1(\delta,\sigma)$.

\begin{lemma}
\label{1.1} Let $\sigma$ be a Blaschke subset of the unit disk $\D$. Then
$$
c_1(\delta,\sigma)=c_1(\delta,H^\infty|\sigma)=c_1(\delta,H^\infty\!/BH^\infty)
$$
for every $\delta$, $0<\delta<1$, where $B=B_\sigma$ and
\begin{align*}
c_1(\delta,H^\infty\!/BH^\infty)&=:\sup\Big\{\big\|\frac1f\big\|_{H^\infty\!/BH^\infty}
\colon\|f\|_\infty\le1,\delta\le\|f(\lambda)\|\text{ for }
\lambda\in\sigma\Big\}
\\
&=\sup\Big\{\inf\big[\|g\|_\infty\colon gf+hB=1\big]\colon
\\
&\qquad\qquad\qquad\qquad\delta\le\|f(\lambda)\|\le\|f\|_\infty\le1\text{ for }
\lambda\in\sigma\Big\}
\end{align*}
and $\|h\|_{H^\infty\!/BH^\infty }$ means $\inf\{\|g\|_\infty\colon g(\lambda)=
h(\lambda)\text{ for }\lambda\in\sigma\}$.
\end{lemma}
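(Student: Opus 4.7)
The plan is to pivot everything through the model contraction $M_B$, using the domination principle $\|f(A)\|\le\|f(M_B)\|$ (quoted in the introduction) together with the Sarason-type isometry $\|f(M_B)\|=\|f\|_{H^\infty/BH^\infty}$, and then to identify the quotient algebra with the trace algebra.

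First I would identify $H^\infty|\sigma$ with $H^\infty/BH^\infty$ as Banach algebras in the simple-spectrum case that governs this discussion. The restriction map $\rho\colon H^\infty\to H^\infty|\sigma$, $f\mapsto f|\sigma$, is surjective by definition and its kernel is precisely $BH^\infty$, since an $H^\infty$ function vanishes on the simple zero set $\sigma$ iff it is divisible by the associated interpolation Blaschke product $B=B_\sigma$. Comparing the induced quotient norm with the trace norm $\inf\{\|f\|_\infty\colon a=f|\sigma\}$ yields a canonical isometric algebra isomorphism, so automatically $c_1(\delta,H^\infty|\sigma)=c_1(\delta,H^\infty/BH^\infty)$. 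The Bezout rewriting displayed in the lemma is then just the tautology ``$gf\equiv1\pmod{BH^\infty}$ iff $gf+hB=1$ for some $h\in H^\infty$'', applied inside $\inf\{\|g\|_\infty\colon [g]=[f]^{-1}\}$.

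The substantive equality is therefore $c_1(\delta,\sigma)=c_1(\delta,H^\infty/BH^\infty)$. For ``$\le$'', I fix a contraction $A$ with $\sigma(A)\subset\sigma$ and $f\in H^\infty$ with $\delta\le|f(\lambda_j(A))|\le\|f(A)\|\le1$ such that $f(A)$ is invertible, and compare with the model. By the domination principle, $f(M_B)$ is also invertible and by Sarason's commutant lifting $f(M_B)^{-1}=g(M_B)$ for some $g\in H^\infty$ with $\|g\|_\infty$ attaining (or approximating from above) $\|[f]^{-1}\|_{H^\infty/BH^\infty}$. The identity $gf-1\in BH^\infty$ pushes forward under the functional calculus to $g(A)f(A)=I-B(A)h(A)$, and the key observation $B(A)=0$ (Cayley--Hamilton plus invertibility of $1-\bar\lambda_j A$ when $A$ is a finite matrix; the Nagy--Foias $C_0$ calculus for a contraction whose spectrum lies in $\sigma$, counted with multiplicity) gives $f(A)^{-1}=g(A)$. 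Applying domination once more,
\[
\|f(A)^{-1}\|=\|g(A)\|\le\|g(M_B)\|=\|f(M_B)^{-1}\|=\|[f]^{-1}\|_{H^\infty/BH^\infty},
\]
and taking sup over admissible $(A,f)$ yields ``$\le$''. The reverse ``$\ge$'' comes for free by specialising $A=M_B$: this is itself an admissible contraction with $\sigma(M_B)=\operatorname{clos}\{\lambda_j\}\subset\sigma$, so $(M_B,f)$ is a legitimate competitor in the supremum defining $c_1(\delta,\sigma)$, and the quotient-norm identity above shows that every competitor in the definition of $c_1(\delta,H^\infty/BH^\infty)$ is realised in this way.

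The main obstacle is isolating the statement $B(A)=0$ in the infinite-dimensional case, which is essentially what underlies the quoted domination inequality: one has to know that a Hilbert space contraction whose spectral data is a subsequence of $\sigma$ belongs to the class $C_0$ with minimal inner function dividing $B_\sigma$, so that the $H^\infty$ calculus factors through $H^\infty/BH^\infty$. Once this is granted, the rest of the argument is routine algebra inside the quotient algebra together with the Sarason isometry.
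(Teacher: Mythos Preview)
Your approach is the same as the paper's: both directions go through the model operator $M_B$, using $B(A)=0$ and von~Neumann for ``$\le$'' and Sarason's commutant lifting for ``$\ge$''. One logical slip to fix: in the ``$\le$'' direction you assert that ``by the domination principle, $f(M_B)$ is also invertible'', but $\|f(A)\|\le\|f(M_B)\|$ does not transfer invertibility from $f(A)$ to $f(M_B)$. The clean way (and what the paper does) is to bypass this: whenever a Bezout identity $gf+hB=1$ exists, $B(A)=0$ and von~Neumann give $f(A)^{-1}=g(A)$ with $\|f(A)^{-1}\|\le\|g\|_\infty$, hence $\|f(A)^{-1}\|\le\|1/f\|_{H^\infty/BH^\infty}$ (the right-hand side being $+\infty$ if no Bezout solution exists). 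No claim about $f(M_B)$ is needed for this direction.
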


\begin{proof}
For every matrix $A$, $\|A\|\le1$, and $f\in H^\infty$, the von Neumann
inequality entails
$$
\|f(A)\|\le\|f\|_\infty\,.
$$
Since $B(A)=0$, $B=B_\sigma$, for $A$ having $\sigma(A)\subset\sigma$, we get
$$
\|f(A)\|\le\inf_{g\in H^\infty}\|f+Bg\|_\infty=\|f\|_{H^\infty\!/BH^\infty}\,.
$$
This implies $f(A)^{-1}=h(A)$ and $\|f(A)^{-1}\|\le\|h\|_{H^\infty}$ for every
solution $h$ of the equation $fh+Bk=1$, and therefore
$\|f(A)^{-1}\|\le\|\frac1f\|_{H^\infty\!/BH^\infty}$. Thus,
$c_1(\delta,\sigma)\le c_1(\delta,H^\infty\!/BH^\infty)$.

On the other hand, there exists an ``extreme operator'' (matrix) for which the
above calculus inequality becomes an identity. Indeed, if $A=M_B$, the ``model
operator'' mentioned above, then $\|h(M_B)\|=\|h\|_{H^\infty\!/BH^\infty}$ for
every $h\in H^\infty$ (Sarason's commutant lifting theorem, see for
example,~\cite{Nik1} or~\cite{Nik2}). Hence, $c_1(\delta,\sigma)\ge
c_1(\delta,H^\infty\!/BH^\infty)$.
\end{proof}

Finally, we quote the principal result from~\cite{GMN}

\begin{theorem}\textup(\cite{GMN}\textup)
Let $\sigma=\{\lambda_j\}$ be a Blaschke sequence in the disk $\D$. The
following are equivalent.
\begin{itemize}

\item[(1)] {$\delta_1(H^\infty|\sigma)=0$.}

\item[(2)] {The following Weak Embedding Property holds\textup: for every
$\eps>0$ there exists $C$ such that
$$
\sum_{j\ge1}\frac{(1-|\lambda_j|^2)(1-|z|^2)}{|1-\bar\lambda_jz|^2}\le C
$$
for every
$z\in\D\setminus\bigcup_{\lambda\in\sigma}\big\{\zeta\colon|b_\lambda(\zeta)|
<\eps\big\}$\,.}

\item[(3)] {For every $\eps>0$ there exists $\eta$ such that $|B(z)|\le\eta$
implies $\inf_{\lambda\in\sigma}|b_\lambda(z)|\le\eps$\textup; here $B$ is the
corresponding Blaschke product $B=\prod_{\lambda\in\sigma}b_\lambda$\,.}
\end{itemize}
Moreover, if $\eta(\eps)=\max\{\eta\}$ over all $\eta$ admitted in $(3)$, then
$$
\frac1{\eta(\delta)}\le c_1(\delta,H^\infty|\sigma)\le\frac a{\eta(\delta/3)^2}
\log\frac1{\eta(\delta/3)}
$$
for every $\delta,$ $0<\delta<1;$ $a>0$ is a numerical constant.
\end{theorem}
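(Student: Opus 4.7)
\emph{Proof plan.} The plan is to establish $(2)\Leftrightarrow(3)$ via the standard correspondence between $|B|$ and the Poisson integral of the Blaschke counting measure, and to obtain $(1)\Leftrightarrow(3)$ together with the quantitative two-sided estimate by direct arguments: the upper bound via a corona-type reduction combined with Schwarz--Pick, the lower bound via a one-point test function.

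For $(2)\Leftrightarrow(3)$, I would start from the identity $\log|B(z)|^{-2}=\sum_j\log|b_{\lambda_j}(z)|^{-2}$ together with the elementary comparison $\log t^{-2}\asymp 1-t^2$ valid for $t$ bounded away from $0$. On the set where $\inf_j|b_{\lambda_j}(z)|\ge\eps$, each summand is comparable to the Poisson-type term $(1-|\lambda_j|^2)(1-|z|^2)/|1-\bar\lambda_j z|^2$, so a uniform upper bound on the Poisson sum off the pseudo-hyperbolic neighbourhoods $\{|b_\lambda|<\eps\}$ becomes equivalent to a uniform lower bound on $|B|$ on the same set. Suitably adjusting the constants $\eps$ and $\eta$ then passes between (2) and (3) in both directions.

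For the upper bound $c_1(\delta,\sigma)\le a\,\eta(\delta/3)^{-2}\log(1/\eta(\delta/3))$, which gives $(3)\Rightarrow(1)$, I would fix an admissible $f$ (so $\|f\|_\infty\le 1$ and $|f(\lambda)|\ge\delta$ on $\sigma$) and verify the pointwise corona bound $|f(z)|+|B(z)|\ge\eta(\delta/3)$ on $\D$. Indeed, if $|B(z)|<\eta(\delta/3)$, then by (3) with $\eps=\delta/3$ some $\lambda\in\sigma$ satisfies $|b_\lambda(z)|<\delta/3$, and the Schwarz--Pick inequality $|b_{f(\lambda)}(f(z))|\le|b_\lambda(z)|$ yields $|f(z)|\ge(\delta-\delta/3)/(1-\delta^2/3)\ge 2\delta/3$. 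Applying Uchiyama's effective corona theorem to the pair $(f,B)$ produces $g,h\in H^\infty$ with $gf+hB=1$ and $\|g\|_\infty\le a\,\eta(\delta/3)^{-2}\log(1/\eta(\delta/3))$; Lemma~\ref{1.1} then bounds $c_1(\delta,\sigma)$ by $\|g\|_\infty$.

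For the lower bound $c_1(\delta,\sigma)\ge 1/\eta(\delta)$, which gives $(1)\Rightarrow(3)$, I would run a one-point test. Choose $\eta^*$ slightly larger than $\eta(\delta)$ so that some $z_0\in\D$ satisfies $|B(z_0)|\le\eta^*$ while $|b_\lambda(z_0)|>\delta$ for every $\lambda\in\sigma$, and take $f=b_{z_0}$. Then $\|f\|_\infty=1$ and $|f(\lambda_j)|=|b_{z_0}(\lambda_j)|=|b_{\lambda_j}(z_0)|>\delta$, so $f$ is admissible. For any $g\in H^\infty$ with $gf\equiv 1\pmod B$, writing $gf-1=Bk$ and evaluating at $z_0$ (where $f$ vanishes) gives $|k(z_0)|=1/|B(z_0)|\ge 1/\eta^*$, while on the unit circle $|k|=|gf-1|\le\|g\|_\infty+1$, so $\|g\|_\infty\ge 1/\eta^*-1$. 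Letting $\eta^*\searrow\eta(\delta)$ and invoking Lemma~\ref{1.1} yields the stated lower bound (up to a harmless additive constant, absorbed for small $\eta$). The main obstacle is the invocation of Uchiyama's corona estimate with its precise $\eta^{-2}\log(1/\eta)$ dependence, which requires the full strength of his refinement of Carleson's original solution of the corona problem.
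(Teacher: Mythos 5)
This theorem is not proved in the paper at all: it is quoted from \cite{GMN} as background, so there is no internal proof to compare yours with line by line. Taken on its own, your outline is essentially the standard argument, and its ingredients are precisely the ones the present paper deploys for its own results: the identification $c_1(\delta,\sigma)=c_1(\delta,H^\infty/BH^\infty)$ of Lemma~\ref{1.1}; the Schwarz--Pick transfer of the bound $|f(\lambda)|\ge\delta$ from $\sigma$ to the pseudohyperbolic neighbourhoods combined with the effective (Carleson--Uchiyama) corona estimate $c\,\eta^{-2}\log(1/\eta)$, exactly as in Lemma~\ref{l5}; and, for the lower bound, the evaluation trick $\|h\|_\infty\ge|h(z_0)|=1/|B(z_0)|$ together with $\|g\|_\infty\ge\|h\|_\infty-1$, which is the same device as in Lemma~\ref{l6}. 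Your $(2)\Leftrightarrow(3)$ step is also fine: using $1-|b_{\lambda_j}(z)|^2=(1-|\lambda_j|^2)(1-|z|^2)/|1-\bar\lambda_j z|^2$, the inequality $1-t^2\le\log t^{-2}$ valid for all $t\in(0,1]$ gives $(3)\Rightarrow(2)$, and $\log t^{-2}\le C_\eps(1-t^2)$ for $t\ge\eps$ gives the converse, for each fixed $\eps$.

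Two quantitative points need attention. First, your one-point test proves $c_1(\delta)\ge 1/\eta(\delta)-1$, not the stated $1/\eta(\delta)$: since $1-gf=hB$ and $|B|=1$ a.e.\ on the circle, one only gets $\|h\|_\infty\le\|g\|_\infty+1$, and this additive loss is intrinsic to the route you chose; it is not ``absorbed for small $\eta$'' --- the stated constant is missed by exactly $1$ for every $\eta$. The weaker bound still yields the implication $(1)\Rightarrow(3)$ (if $(3)$ fails at some $\eps_0$, then $c_1(\eps_0)\ge 1/\eta-1$ for every $\eta>0$, hence $c_1(\eps_0)=\infty$ and $\delta_1\ge\eps_0$), but recovering the displayed inequality of \cite{GMN} requires a sharper argument than the triangle inequality on the circle. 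Second, in the upper bound your pointwise estimate really gives $|f(z)|+|B(z)|\ge\min\{\eta(\delta/3),\,2\delta/3\}$; to conclude that this minimum equals $\eta(\delta/3)$ you should record the elementary fact $\eta(\eps)\le\eps$ (at a boundary point in $\D$ of the nonempty open set $\{z\colon\inf_\lambda|b_\lambda(z)|>\eps\}$ one has $\inf_\lambda|b_\lambda(z)|=\eps$, hence $|B(z)|\le\eps$ there), in the same way that the paper's Lemma~\ref{l5} keeps the minimum of two quantities explicit in~\eqref{eta}. With these two repairs, and granting the $\eta^{-2}\log(1/\eta)$ corona estimate (which the paper itself takes from \cite{Nik2}), your plan is sound and consistent with the methods of the paper.
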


The paper is organized as follows. Section~2 contains our principal result: for
a given $\delta$, $0<\delta<1$, there exists a Blaschke product $B=B_\sigma$
such that $\delta_1(\sigma,H^\infty|\sigma )=\delta$. We also exhibit an upper
estimate for $c_1(\delta,H^\infty|\sigma)$ for $\delta_1<\delta\le1$. Since the
problem (and our result) on the invertibility threshold is conformally
invariant, we will change the variable and work (in Section~2) in the upper
half-plane $\C_+=\{z\in\C\colon\im(z)>0\}$ instead of the unit disk $\D$.

In Section~3, we use the above result in order to give a counterexample to a
stronger form of the so-called (Bourgain--Tzafriri) restricted invertibility
conjecture. The conjecture claims (see~\cite{CCLV} and comments in Section~3):
for every unconditional normalized basic sequence $\{x_j\}_{j\in J}$ in a
Hilbert space $H$ and for every bounded operator $T\colon H\to H$ having
$\inf_{j\in J} \|Tx_j\|>0$ there exists a partition $J=\bigcup_{i=1}^r\!J_i$
such that all restrictions $T|H_{J_i}$, $i=1,\ldots,r$, are left invertible;
here $H_{J'}= \spn\{x_j\colon j\in J'\}$ for every $J'\subset J$. The
conjecture is still open (June 2010). A stronger form (which is disproved in
Section~3) claims the same property but for all summation basic sequences
$\{x_j\}$.

\section{Algebras $H^\infty|\sigma$ with a given constant $\delta_1$}

We start with some geometrical considerations. In this section the symbol
$b_\lambda$ always means the Blaschke factor with the zero $\lambda$ in the
upper half-plane, i.~e.,
$$
b_\lambda(z)=\frac{z-\lambda}{z-\overline\lambda}\cdot\frac{|1+\lambda^2|}{1+\lambda^2}\,.
$$

\begin{lemma}
\label{l1} The rectangle
$$
\left\{z\colon\frac{\sqrt{1+\eps^2}}{\sqrt{1+\eps^2}+\sqrt2\,\eps}\le \frac{\im
z}{\im\lambda}\le\frac{\sqrt{1+\eps^2}}{\sqrt{1+\eps^2}-\sqrt2\,\eps},\
\frac{|\re(z-\lambda)|}{\im\lambda}\le\frac{\sqrt2\,\eps}{\sqrt{1-\eps^2}}\right\}
$$
is inscribed into the circle $\{z\colon|b_\lambda(z)|\le\eps\}$.
\end{lemma}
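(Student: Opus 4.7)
The unimodular factor $\frac{|1+\lambda^2|}{1+\lambda^2}$ is irrelevant, so the inequality $|b_\lambda(z)|\le\eps$ is just $|z-\lambda|^2\le\eps^2|z-\overline\lambda|^2$. My first step is to rewrite this in Euclidean form. Writing $\lambda=a+bi$ (with $b=\im\lambda>0$) and $z=x+yi$, and introducing the abbreviations $u=\sqrt{1+\eps^2}$, $v=\sqrt2\,\eps$ (so that $u^2-v^2=1-\eps^2$), a straightforward expansion of $|z-\lambda|^2-\eps^2|z-\overline\lambda|^2\le 0$ and completion of the square in $y$ give
$$
(x-a)^2+\Bigl(y-b\,\tfrac{u^2}{u^2-v^2}\Bigr)^2\le b^2\,\tfrac{2v^2}{(u^2-v^2)^2}.
$$
So $\{|b_\lambda|\le\eps\}$ is the Euclidean closed disk with centre $(a,bc)$ and radius $br$, where $c=u^2/(u^2-v^2)$ and $r=v\sqrt 2/(u^2-v^2)$.

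Next, I would identify the rectangle in the statement in the same notation: its $y$-range is $[bu/(u+v),\,bu/(u-v)]$, and its half-width is $w=bv/\sqrt{u^2-v^2}$. A one-line check shows that $bu/(u+v)+bu/(u-v)=2bc$, so the rectangle is vertically centred at the same height as the disk, and its corners are symmetric in $x$ about $x=a$. It therefore suffices to show that one corner lies on the bounding circle.

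The main (and only) substantial step is the verification
$$
w^2+\bigl(bu/(u-v)-bc\bigr)^2=b^2 r^2.
$$
Using $bu/(u-v)-bc=b(uv)/(u^2-v^2)$, the left side equals
$$
\frac{b^2 v^2}{u^2-v^2}+\frac{b^2 u^2 v^2}{(u^2-v^2)^2}=\frac{b^2 v^2\bigl((u^2-v^2)+u^2\bigr)}{(u^2-v^2)^2}=\frac{b^2 v^2\,(2u^2-v^2)}{(u^2-v^2)^2},
$$
while $b^2 r^2=\dfrac{2b^2 v^2}{(u^2-v^2)^2}$. Equality therefore reduces to the identity $2u^2-v^2=2$, which is just $2(1+\eps^2)-2\eps^2=2$. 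By symmetry the other three corners lie on the circle as well, which shows that the rectangle is inscribed in the disk $\{|b_\lambda|\le\eps\}$.

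There is no real obstacle here; the only subtlety is choosing notation ($u,v$ above) that hides the nested square roots and makes the two key algebraic identities $u^2-v^2=1-\eps^2$ and $2u^2-v^2=2$ visible. Once these are in hand, the rest is bookkeeping.
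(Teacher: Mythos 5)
Your proposal is correct and follows essentially the same route as the paper: both arguments amount to verifying that the four corners of the rectangle lie on the boundary circle of the pseudohyperbolic disk $\{|b_\lambda|\le\eps\}$ (the paper checks the equivalent identity $(a^2+b^2+1)(1-\eps^2)=2b(1+\eps^2)$ in normalized coordinates, while you first write out the disk's Euclidean centre and radius and then check the corner distances). The extra observations you make (same vertical centre, convexity of the disk) are harmless and the algebra checks out.
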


\begin{proof}
Put
$$
a=\frac{\re(z-\lambda)}{\im\lambda}\qquad\text{and}\qquad b=\frac{\im
z}{\im\lambda},
$$
then
$$
|b_\lambda(z)|^2=\left|\frac{z-\lambda}{z-\bar\lambda}\right|^2=\frac{a^2+(b-1)^2}{a^2+(b+1)^2}.
$$
We have to check that the vertices of the rectangle are on the mentioned
circle, i.~e., we need to check that the equality
$$
\frac{a^2+(b-1)^2}{a^2+(b+1)^2}=\eps^2
$$
holds if
$$
a=\pm\frac{\sqrt2\,\eps}{\sqrt{1-\eps^2}},\qquad
b=\frac{\sqrt{1+\eps^2}}{\sqrt{1+\eps^2}\pm\sqrt2\,\eps}.
$$
We shall verify the required identity in the form
$(a^2+b^2+1)(1-\eps^2)=2b(1+\eps^2)$:
\begin{align*}
(a^2+b^2+1)(1-\eps^2)=&\left(\frac{2\eps^2}{1-\eps^2}+
\frac{1+\eps^2}{(\sqrt{1+\eps^2}\pm\sqrt2\,\eps)^2}+1\right)(1-\eps^2)=
\\
&2\eps^2+(1+\eps^2)\frac{\sqrt{1+\eps^2}\mp\sqrt2\,\eps}{\sqrt{1+\eps^2}\pm\sqrt2\,\eps}
+1-\eps^2=
\\
&(1+\eps^2)\frac{2\sqrt{1+\eps^2}}{\sqrt{1+\eps^2}\pm\sqrt2\,\eps}=2b(1+\eps^2).
\end{align*}
\end{proof}

Now, we are using Frostman shifts of an inner function $\Theta$:
$$
\Theta_c\df\frac{\Theta+c}{1+\bar c\Theta}\,,
$$
which is known to be a Blaschke product for almost all values $c$, $|c|<1$. In
some cases it is easy to check that this is a Blaschke product {\it for all\/}
$c\ne0$. For example, this is the case for $\Theta=e^{iaz}$, $a>0$. Indeed, the
inner function $\Theta_c$ is analytic in a neighborhood of any real point,
therefore it could have a singular factor with a mass at infinity only. But
there is no such factor because $\lim_{y\to+\infty}\Theta_c(iy)=c\ne0$. For
more details, see, for example,~\cite{Gar} or~\cite{Nik1}.

\begin{lemma}
\label{l2} Let $z_k$ be zeroes of the Blaschke product
$$
B_{\alpha,\gamma}=\frac{e^{\pi i\gamma z}+e^{-\pi\alpha}} {1+e^{\pi(i\gamma
z-\alpha)}}=\prod_{k=-\infty}^\infty b_{z_k}\,,
$$
i.~e.\textup, $z_k=(2k+1+i\alpha)/\gamma$, $z_k\in\Z$. Then the strip
$$
S_{\alpha,\gamma}=\left\{z\colon\frac{\alpha\sqrt{1+\alpha^2}}{\sqrt{1+\alpha^2}+1}
\le\gamma\im z \le\frac{\alpha\sqrt{1+\alpha^2}}{\sqrt{1+\alpha^2}-1}\right\}
$$
is in the set
$$
\bigcup_{k=-\infty}^\infty\{z\colon|b_{z_k}(z)|<\eps\},
$$
if $\eps>1/\sqrt{1+2\alpha^2}$.
\end{lemma}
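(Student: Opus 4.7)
The plan is to reduce the covering of the strip $S_{\alpha,\gamma}$ to a purely geometric statement about the rectangles supplied by Lemma \ref{l1}, applied to each zero $z_k$. Since all zeros $z_k=(2k+1+i\alpha)/\gamma$ share the same imaginary part $\im z_k=\alpha/\gamma$, and they form an arithmetic progression on the line $\im z=\alpha/\gamma$ with real spacing $2/\gamma$, the rectangles $R_k\subset\{|b_{z_k}|\le\eps\}$ produced by Lemma \ref{l1} are all congruent translates of one another. Writing out Lemma \ref{l1} with $\lambda=z_k$, each $R_k$ is the set
\begin{equation*}
\frac{\alpha\sqrt{1+\eps^2}}{\gamma(\sqrt{1+\eps^2}+\sqrt2\,\eps)}\le\im z\le\frac{\alpha\sqrt{1+\eps^2}}{\gamma(\sqrt{1+\eps^2}-\sqrt2\,\eps)},\qquad
|\re(z-z_k)|\le\frac{\sqrt2\,\eps\alpha}{\gamma\sqrt{1-\eps^2}}.
\end{equation*}

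Next I would show that $S_{\alpha,\gamma}\subset\bigcup_k R_k$ reduces to two inequalities: (i) the horizontal half-width of $R_k$ must be at least half of the spacing $2/\gamma$, so that adjacent rectangles overlap and their union fills the horizontal strip $\im z_k=\alpha/\gamma$; (ii) the vertical range of $R_k$ must contain the vertical range of $S_{\alpha,\gamma}$. Condition (i) amounts to $\sqrt2\,\eps\alpha/\sqrt{1-\eps^2}\ge1$, which is the same as $\eps^2(1+2\alpha^2)\ge1$, i.e., $\eps\ge 1/\sqrt{1+2\alpha^2}$. Condition (ii) amounts to
\begin{equation*}
\frac{\sqrt{1+\eps^2}}{\sqrt{1+\eps^2}+\sqrt2\,\eps}\le\frac{\sqrt{1+\alpha^2}}{\sqrt{1+\alpha^2}+1}
\quad\text{and}\quad
\frac{\sqrt{1+\alpha^2}}{\sqrt{1+\alpha^2}-1}\le\frac{\sqrt{1+\eps^2}}{\sqrt{1+\eps^2}-\sqrt2\,\eps}.
\end{equation*}

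The main point — and the only bit of actual computation — is to check that both inequalities in (ii) are in fact equivalent to the very same condition $\eps\ge1/\sqrt{1+2\alpha^2}$ obtained from (i). Cross-multiplying (positivity of every denominator is immediate for $0<\eps<1$), both inequalities simplify to $\sqrt{1+\eps^2}\le\sqrt2\,\eps\sqrt{1+\alpha^2}$, i.e. $1+\eps^2\le 2\eps^2(1+\alpha^2)$, which is exactly $\eps^2(1+2\alpha^2)\ge1$. Hence the single hypothesis $\eps>1/\sqrt{1+2\alpha^2}$ is sufficient for all three geometric conditions simultaneously, and the claimed inclusion follows.

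I expect no real obstacle beyond the bookkeeping: the nontrivial design choice — placing the zeros at height $\alpha/\gamma$ and at horizontal spacing $2/\gamma$ — is precisely what makes the horizontal covering and the vertical covering share one threshold. Everything else is plugging Lemma \ref{l1} into the strip definition and simplifying.
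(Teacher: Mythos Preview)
Your proposal is correct and follows essentially the same approach as the paper: apply Lemma~\ref{l1} to each $z_k$ and check that the resulting rectangles tile the strip. The only cosmetic difference is that the paper plugs in the critical value $\eps=1/\sqrt{1+2\alpha^2}$ at the outset and verifies the rectangle sides become exactly $\frac{\sqrt{1+\alpha^2}}{\sqrt{1+\alpha^2}\pm1}$ and $\frac1\alpha$, whereas you keep $\eps$ general and recover this threshold from the two covering conditions; the computations are the same.
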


\begin{proof}
Apply Lemma~\ref{l1} with $\lambda=z_k$ and $\eps=1/\sqrt{1+2\alpha^2}$. Then
the sides of the rectangle are
$$
\frac{\sqrt{1+\eps^2}}{\sqrt{1+\eps^2}\pm\sqrt2\,\eps}=
\frac{\sqrt{1+\alpha^2}}{\sqrt{1+\alpha^2}\pm1} \qquad\text{and}\qquad
\frac{\sqrt2\,\eps}{\sqrt{1-\eps^2}}=\frac1\alpha,
$$
i.~e., the rectangle from Lemma~\ref{l1} is
$$
\left\{z\colon\frac{\sqrt{1+\alpha^2}}{\sqrt{1+\alpha^2}+1}\le
\frac\gamma\alpha\im z\le\frac{\sqrt{1+\alpha^2}}{\sqrt{1+\alpha^2}-1}, \
|\re(z-z_k)|\le\frac1\gamma\right\}.
$$
It is clear that the union of these rectangles gives just the required strip.
\end{proof}

{\bf Remark 1.} Let us note that the set
$$
S_{\alpha,\gamma}\setminus
\bigcup_{k=-\infty}^\infty\left\{z\colon|b_{z_k}(z)|<\frac1{\sqrt{1+2\alpha^2}}\right\}
$$
consists of a discrete set of points
$$
\frac1\gamma\left(2m+\frac{\alpha\sqrt{1+\alpha^2}}{\sqrt{1+\alpha^2}\pm1}\,i\right)
$$
on the upper and lower boundaries of the strip $S_{\alpha,\gamma}$ and the
distance from any such point to the set of zeroes $\{z_k\}$ is equal to
$1/\sqrt{1+2\alpha^2}$.

\begin{lemma}
\label{l3} The Blaschke product $B=\prod_{n=0}^\infty B_{\alpha,\beta^n\!\rho}$
converges for all $\alpha,\beta,\rho$ such that $\alpha>0$\textup,
$0<\beta<1$\textup, $\rho>0$. If
$$
\beta=\frac{\sqrt{1+\alpha^2}-1}{\sqrt{1+\alpha^2}+1}\,,
$$
then the half-plane
$$
\Pi_{\alpha,\rho}=\left\{z\colon\im z\ge
\frac{\alpha\sqrt{1+\alpha^2}}{\rho(\sqrt{1+\alpha^2}+1)}\right\}
$$
is in the set
$$
\bigcup_{\lambda\in \sigma(B)}\{z\colon|b_\lambda(z)|<\eps\}
$$
for $\eps>1/\sqrt{1+2\alpha^2}$.
\end{lemma}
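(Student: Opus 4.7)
The plan is to stack the strips from Lemma~\ref{l2} along the geometric sequence $\gamma_n=\beta^n\rho$, $n\ge0$, and to verify that the chosen value of $\beta$ makes these strips tile $\Pi_{\alpha,\rho}$ with neither gap nor overlap.

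\emph{Convergence.} The zeros of $B_{\alpha,\gamma_n}$ are $z_k^{(n)}=(2k+1+i\alpha)/\gamma_n$, $k\in\Z$, with $\im z_k^{(n)}=\alpha/\gamma_n$ and $|z_k^{(n)}|^2=((2k+1)^2+\alpha^2)/\gamma_n^2$. A comparison of $\sum_k\im z_k^{(n)}/(1+|z_k^{(n)}|^2)$ with an integral gives a bound $\lesssim\gamma_n$ uniform in~$n$, so the double sum is dominated by the geometric series $\sum_n\gamma_n=\rho/(1-\beta)<\infty$. Thus the Blaschke condition for the combined zero set holds, and the product $B$ converges to a Blaschke product.

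\emph{Tiling of $\Pi_{\alpha,\rho}$.} Put $A=\alpha\sqrt{1+\alpha^2}/(\sqrt{1+\alpha^2}+1)$. Lemma~\ref{l2} then reads $S_{\alpha,\gamma_n}=\{z:A/\gamma_n\le\im z\le A/(\beta\gamma_n)\}$. The key algebraic identity, and the only reason for the prescribed choice $\beta=(\sqrt{1+\alpha^2}-1)/(\sqrt{1+\alpha^2}+1)$, is that it makes $A/(\beta\gamma_n)=A/\gamma_{n+1}$; hence the top of the $n$-th strip coincides exactly with the bottom of the $(n+1)$-th. The telescoping union $\bigcup_{n\ge0}S_{\alpha,\gamma_n}$ therefore equals $\{z:\im z\ge A/\rho\}=\Pi_{\alpha,\rho}$.

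\emph{Conclusion and expected obstacle.} By Lemma~\ref{l2}, each $S_{\alpha,\gamma_n}$ sits inside $\bigcup_k\{|b_{z_k^{(n)}}(z)|<\eps\}$ as soon as $\eps>1/\sqrt{1+2\alpha^2}$; since $\sigma(B)=\bigcup_n\{z_k^{(n)}:k\in\Z\}$, the desired inclusion $\Pi_{\alpha,\rho}\subset\bigcup_{\lambda\in\sigma(B)}\{|b_\lambda|<\eps\}$ follows at once. The only substantive point is the interlocking identity $A/(\beta\gamma_n)=A/\gamma_{n+1}$: any other ratio in $(0,1)$ would either leave horizontal slabs of $\Pi_{\alpha,\rho}$ uncovered (breaking the half-plane statement) or overlap wastefully. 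The convergence of the Blaschke product and the identification of its zero set with the union of horizontal rows are routine bookkeeping.
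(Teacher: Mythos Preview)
Your proof is correct and follows the same route as the paper: stack the strips $S_{\alpha,\gamma_n}$ from Lemma~\ref{l2}, observe that the special value of $\beta$ makes the upper edge of $S_{\alpha,\gamma_n}$ coincide with the lower edge of $S_{\alpha,\gamma_{n+1}}$, and conclude via Lemma~\ref{l2}. The only cosmetic difference is in the convergence check: the paper bounds $1-B_{\alpha,\beta^n\rho}(i)$ directly from the explicit Frostman-shift formula, while you verify the Blaschke condition on the union of the zero rows---both reduce to the same geometric series $\sum_n\beta^n\rho$.
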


\begin{figure}
\begin{center}
\begin{picture}(360,200)
\thinlines
\put(160,0){\vector(0,1){180}}
\put(10,20){\vector(1,0){300}}
\linethickness{.3pt}
\put(40,60){\circle*{2}}
\put(56,60){\circle*{2}}
\put(72,60){\circle*{2}}
\put(88,60){\circle*{2}}
\put(104,60){\circle*{2}}
\put(120,60){\circle*{2}}
\put(136,60){\circle*{2}}
\put(152,60){\circle*{2}}
\put(168,60){\circle*{2}}
\put(184,60){\circle*{2}}
\put(200,60){\circle*{2}}
\put(216,60){\circle*{2}}
\put(232,60){\circle*{2}}
\put(248,60){\circle*{2}}
\put(264,60){\circle*{2}}
%\put(280,60){\circle*{2}}
\put(280,60){\footnotesize $z_k=\frac{2k+1+i\alpha}{\rho}$}
\put(28,80){\circle*{2}}
\put(52,80){\circle*{2}}
\put(76,80){\circle*{2}}
\put(100,80){\circle*{2}}
\put(124,80){\circle*{2}}
\put(148,80){\circle*{2}}
\put(172,80){\circle*{2}}
\put(196,80){\circle*{2}}
\put(220,80){\circle*{2}}
\put(244,80){\circle*{2}}
\put(268,80){\circle*{2}}
%\put(292,80){\circle*{2}}
\put(280,80){\footnotesize $z_k=\frac{2k+1+i\alpha}{\rho\beta}$}
\put(34,110){\circle*{2}}
\put(70,110){\circle*{2}}
\put(106,110){\circle*{2}}
\put(142,110){\circle*{2}}
\put(178,110){\circle*{2}}
\put(214,110){\circle*{2}}
\put(250,110){\circle*{2}}
%\put(286,110){\circle*{2}}
\put(280,110){\footnotesize $z_k=\frac{2k+1+i\alpha}{\rho\beta^2}$}
\put(25,155){\circle*{2}}
\put(79,155){\circle*{2}}
\put(133,155){\circle*{2}}
\put(187,155){\circle*{2}}
\put(241,155){\circle*{2}}
%\put(295,155){\circle*{2}}
\put(280,155){\footnotesize $z_k=\frac{2k+1+i\alpha}{\rho\beta^3}$}
\put(152,62){\circle{22}}
\put(168,62){\circle{22}}
\put(148,84){\circle{34}}
\put(172,84){\circle{34}}
\end{picture}
\caption{}
\label{zeroes}
\end{center}
\end{figure}
(Fig.~\ref{zeroes} illustrates zeroes of $B$ and four circles
$|b_\lambda(z)|=\frac1{\sqrt{1+2\alpha^2}}$ for zeroes
$\lambda=\frac{\pm1+i\alpha}\rho$ and $\lambda=\frac{\pm1+i\alpha}{\beta\rho}$)

\begin{proof}
The following estimate implies convergence of $B$:
\begin{align*}
1-B_{\alpha,\beta^n\!\rho}(i)=&1-\frac{e^{-\pi\beta^n\!\rho}+e^{-\pi\alpha}}
{1+e^{-\pi(\alpha+\beta^n\!\rho)}}=
\\
&\frac{(1-e^{-\pi\beta^n\!\rho})(1-e^{-\pi\alpha})}
{1+e^{-\pi(\alpha+\beta^n\!\rho)}}\le
\\
&(1-e^{-\pi\alpha})\pi\beta^n\!\rho.
\end{align*}
It remains to note that for $\beta=(\sqrt{1+\alpha^2}-1)/(\sqrt{1+\alpha^2}+1)$
and $\gamma_n=\beta^n\!\rho$ the upper boundary of the strip from
Lemma~\ref{l2} for $\gamma=\gamma_{n-1}$ coincides with the lower boundary of
the strip for $\gamma=\gamma_n$. Therefore the union of these strips gives just
the required half-plane.
\end{proof}

{\bf Remark 2.} The set of points on the imaginary axis
$$
v_n=\frac1{\rho\beta^n}\,\frac{\alpha\sqrt{1+\alpha^2}}{\sqrt{1+\alpha^2}+1}\,i
$$
(the points of intersection of four corresponding circles as
on~Fig.\ref{zeroes}) is included into
$$
\Pi_{\alpha,\rho}\setminus \bigcup_{\lambda\in
\sigma(B)}^\infty\left\{z\colon|b_{\lambda}(z)|<\frac1{\sqrt{1+2\alpha^2}}\right\}.
$$
Every point $v_n$ has four nearest zeroes of $B$, namely,
$(i\alpha\pm1)/(\rho\beta^n)$ and $(i\alpha\pm1)/(\rho\beta^{n-1})$ with the
pseudohyperbolic distance just $1/\sqrt{1+2\alpha^2}$ from each of them.

Recall that pseudohyperbolic distance between two points $z,w\in\C_+$ is
defined by
$$
|b_w(z)|=\left|\frac{z-w}{z-\bar w}\right|
$$
and between two points $z,w\in\D$:
$$
\quad |b_w(z)|=\left|\frac{z-w}{1-\bar wz}\right|\,.
$$

\begin{lemma}
\label{l4} For the Blaschke product $B=\prod_{n=0}^\infty
B_{\alpha,\beta^n\!\rho}$ a lower estimate
$$
|B(x+iy)|\ge\exp\left\{-\frac{\left(1+e^{-\pi\alpha}\right)\pi\rho y}
{\left(e^{-\pi\rho y} -e^{-\pi\alpha}\right)(1-\beta)}\right\}
$$
is true in the strip $0<y<\frac\alpha\rho$. In the complementary half-plane
$y>\frac\alpha\rho$ we have the following upper estimate
$$
|B(x+iy)|\le\exp\left\{-\frac{\log(\cosh\pi\alpha)\cdot\log\frac{\rho
y}{\alpha}}{\log\frac1\beta}\right\}\,.
$$
\end{lemma}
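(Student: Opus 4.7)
My plan is to estimate $B$ factor-by-factor: $-\log|B(x+iy)| = \sum_{n\ge0}\bigl(-\log|B_{\alpha,\beta^n\rho}(x+iy)|\bigr)$, exploiting the Möbius form $B_{\alpha,\gamma}(z) = (w+s)/(1+sw)$ with $w = e^{\pi i\gamma z}$ and $s = e^{-\pi\alpha}$. Setting $r = |w| = e^{-\pi\gamma y}$ and $t = \re w \in [-r,r]$, a short computation gives
\begin{equation*}
|B_{\alpha,\gamma}(z)|^2 = \frac{r^2+2st+s^2}{1+2st+s^2r^2}\,,\qquad
\frac{\partial|B_{\alpha,\gamma}|^2}{\partial t} = \frac{2s(1-r^2)(1-s^2)}{(1+2st+s^2r^2)^2} > 0\,.
\end{equation*}
Thus $|B_{\alpha,\gamma}(z)|$ is strictly monotone in $t$, with minimum $(r-s)/(1-sr)$ at $t=-r$ and maximum $(r+s)/(1+sr)$ at $t=+r$. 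These two explicit extremes drive both halves of the lemma.

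For the lower bound, in the strip $0<y<\alpha/\rho$ one has $\gamma_n y \le \rho y < \alpha$ for every $n\ge0$, so $r_n > s$ and therefore $|B_n(z)| \ge (r_n-s)/(1-sr_n) > 0$. The key algebraic identity
\begin{equation*}
\frac{1-sr_n}{r_n-s} = 1 + \frac{(1+s)(1-r_n)}{r_n-s}
\end{equation*}
combined with $\log(1+u) \le u$, with $1-r_n \le \pi\gamma_n y$ (since $r_n = e^{-\pi\gamma_n y}$), and with $r_n - s \ge r_0 - s = e^{-\pi\rho y} - e^{-\pi\alpha}$ (because $r_n$ is increasing in $n$) gives $-\log|B_n(z)| \le (1+s)\pi\gamma_n y/(r_0-s)$. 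Summing the geometric series $\sum_n\gamma_n = \rho/(1-\beta)$ produces the claimed lower estimate at once.

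For the upper bound, in the region $y>\alpha/\rho$ the same monotonicity yields $|B_n(z)| \le (r_n+s)/(1+sr_n)$. The function $r\mapsto(r+s)/(1+sr)$ is itself increasing in $r$, so whenever $r_n \le s$ --- equivalently $\gamma_n y \ge \alpha$, i.e.\ $n \le N := \log(\rho y/\alpha)/\log(1/\beta)$ --- one gets
\begin{equation*}
\frac{r_n+s}{1+sr_n} \le \frac{2s}{1+s^2} = \frac{1}{\cosh\pi\alpha}\,.
\end{equation*}
Bounding the remaining factors by $1$ gives $|B(z)| \le (1/\cosh\pi\alpha)^{\lceil N\rceil} \le \exp\bigl\{-(\log\cosh\pi\alpha)\log(\rho y/\alpha)/\log(1/\beta)\bigr\}$, which is the claim. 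I anticipate no real obstacle beyond noticing the two identities $(1-sr)/(r-s)-1 = (1+s)(1-r)/(r-s)$ (which keeps a single power of $r_0-s$ in the final denominator, not two) and $2s/(1+s^2) = 1/\cosh\pi\alpha$; everything else is elementary calculus and the already-established convergence of the product (Lemma~\ref{l3}).
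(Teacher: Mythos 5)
Your proposal is correct and follows essentially the same route as the paper: the two-sided bound $|r-s|/(1-sr)\le|B_{\alpha,\gamma}|\le(r+s)/(1+sr)$ (which the paper gets directly from the explicit formula with $\cos\pi\gamma x$, you via monotonicity in $\re w$), then the identity $(1-sr)/(r-s)-1=(1+s)(1-r)/(r-s)$ with $\log(1+u)\le u$, $1-r_n\le\pi\gamma_n y$, $r_n-s\ge r_0-s$ and the geometric series for the lower estimate, and the truncation to the roughly $N=\log(\rho y/\alpha)/\log(1/\beta)$ factors bounded by $1/\cosh\pi\alpha$ for the upper estimate. No gaps.
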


\begin{proof}
For the product $B_{\alpha,\gamma}$ we have
$$
|B_{\alpha,\gamma}(x+iy)|^2=\frac{e^{-2\pi\gamma y} +e^{-2\pi\alpha}
+2e^{-\pi(\gamma y+\alpha)}\cos\pi\gamma x} {1+e^{-2\pi(\gamma y+\alpha)}
+2e^{-\pi(\gamma y+\alpha)}\cos\pi\gamma x}\,
$$
and therefore
$$
\Big|\frac{e^{-\pi\gamma y} -e^{-\pi\alpha}}{1-e^{-\pi(\gamma y+\alpha)}}\Big|
\le|B_{\alpha,\gamma}(x+iy)|\le \frac{e^{-\pi\gamma y}
+e^{-\pi\alpha}}{1+e^{-\pi(\gamma y+\alpha)}}\,.
$$
Now, we deduce an estimate from below assuming $0<y<\frac\alpha\rho$\,:
\begin{align*}
\log&\frac1{|B(x+iy)|}=\sum_{n=0}^\infty\log\frac1{|B_{\alpha,\beta^n\!\rho}(x+iy)|}
\le\sum_{n=0}^\infty\log\frac{1-e^{-\pi(\beta^n\!\rho
y+\alpha)}}{e^{-\pi\beta^n\!\rho y} -e^{-\pi\alpha}}
\\
&\le\sum_{n=0}^\infty\left(\frac{1-e^{-\pi(\beta^n\!\rho y+\alpha)}}
{e^{-\pi\beta^n\!\rho y} -e^{-\pi\alpha}}-1\right)=
\sum_{n=0}^\infty\frac{\left(1+e^{-\pi\alpha}\right)\left(1-e^{-\pi\beta^n\!\rho
y}\right)}{e^{-\pi\beta^n\!\rho y} -e^{-\pi\alpha}}
\\
&\le\left(1+e^{-\pi\alpha}\right)\sum_{n=0}^\infty\frac{\pi\beta^n\!\rho y}
{e^{-\pi\rho y} -e^{-\pi\alpha}}=\frac{\left(1+e^{-\pi\alpha}\right)\pi\rho y}
{\left(e^{-\pi\rho y} -e^{-\pi\alpha}\right)(1-\beta)}\,,
\end{align*}
as it was claimed. To estimate $|B(x+iy)|$ from above we replace $B$ by a
finite product $\prod_{0\le n\le N}B_{\alpha,\beta^n\!\rho}$, where
$$
N\df\frac{\log\frac{\rho y}\alpha}{\log\frac1\beta}\,.
$$
The number of such indices $n$ is $[N]+1>N$. Since for these $n$ we have
$$
\beta^n\!\rho y\ge\alpha\,,
$$
for each factor we get an estimate
$$
|B_{\alpha,\beta^n\!\rho}(x+iy)|\le \frac{e^{-\pi\beta^n\!\rho y}
+e^{-\pi\alpha}}{1+e^{-\pi(\beta^n\!\rho y+\alpha)}}\le
\frac{2e^{-\pi\alpha}}{1+e^{-2\pi\alpha}}=\frac1{\cosh\pi\alpha}\,.
$$
Therefore for the whole product we have
$$
|B(x+iy)|\le(\cosh\pi\alpha)^{-N}\,.
$$
\end{proof}

From now on, we fix
$$
\beta=\frac{\sqrt{1+\alpha^2}-1}{\sqrt{1+\alpha^2}+1}\,,
$$
and consider $B=\prod_{n=0}^\infty B_{\alpha,\beta^n\!\rho}$ corresponding to
this $\beta$.

\begin{theorem}
\label{main}
$$
\delta_1(H^\infty/BH^\infty)=\frac1{\sqrt{1+2\alpha^2}}\,.
$$
Moreover\textup, there exists an absolute constant $c$ and another constant
$C=C(\delta_1)$ such that
$$
c_1(\delta)\le\max\big\{\frac{c}{(\delta-\delta_1)^2}\log\frac1{\delta-\delta_1},
\;C\big\}
$$
for every $\delta$, $\delta_1<\delta\le1$.
\end{theorem}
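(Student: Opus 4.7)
Set $\delta_0:=1/\sqrt{1+2\alpha^2}$. The strategy is to compute the GMN function $\eta(\varepsilon)$ -- the largest $\eta$ with $|B(z)|\le\eta\Rightarrow\inf_{\lambda\in\sigma}|b_\lambda(z)|\le\varepsilon$ -- from Lemmas~\ref{l3} and~\ref{l4}, and then feed it into the two--sided GMN bounds quoted in the Introduction.

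The inequality $\delta_1\ge\delta_0$ follows from $c_1(\delta)\ge 1/\eta(\delta)$, a pointwise construction that remains valid regardless of whether $\delta_1=0$ (with the convention $1/0=\infty$). By Remark~2 the points $v_n=\rho^{-1}\beta^{-n}\alpha\sqrt{1+\alpha^2}(\sqrt{1+\alpha^2}+1)^{-1}\,i$ satisfy $\inf_{\lambda\in\sigma}|b_\lambda(v_n)|=\delta_0$: the four nearest zeros of $B$ realize this pseudohyperbolic distance, while the explicit lattice structure of the remaining zeros (supported on horizontal lines at heights $\alpha/(\rho\beta^n)$ with horizontal spacings $2/(\rho\beta^n)$) forces all other zeros to lie strictly farther. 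The upper estimate of Lemma~\ref{l4} then gives $|B(v_n)|\le(\cosh\pi\alpha)^{-N_n}\to 0$, so $\eta(\delta)=0$ for every $\delta<\delta_0$, whence $c_1(\delta)=\infty$.

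For $\delta_1\le\delta_0$ and the quantitative estimate, I would produce a positive lower bound on $\eta(\varepsilon)$ for $\varepsilon>\delta_0$. A sharpened Lemma~\ref{l3}, obtained by applying Lemma~\ref{l1} at threshold $\varepsilon>\delta_0$ (the rectangles are then strictly larger than at $\delta_0$), shows that $\inf_{\lambda\in\sigma}|b_\lambda(z)|>\varepsilon$ forces $\im z<y_0(\varepsilon):=\alpha\sqrt{1+\varepsilon^2}/(\rho(\sqrt{1+\varepsilon^2}+\sqrt2\,\varepsilon))$, which is strictly less than $y_0=\alpha\sqrt{1+\alpha^2}/(\rho(\sqrt{1+\alpha^2}+1))$ and tends to $y_0$ as $\varepsilon\to\delta_0^+$. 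The lower estimate of Lemma~\ref{l4}, applicable since $y_0<\alpha/\rho$, then bounds $|B(z)|$ from below on the half--plane $\{\im z<y_0(\varepsilon)\}$ by an explicit $\eta_0(\varepsilon)>0$. Feeding this into the GMN upper bound $c_1(\delta)\le a\,\eta(\delta/3)^{-2}\log(1/\eta(\delta/3))$ yields a finite $c_1(\delta)$ for $\delta\ge 3\delta_0$, and, with quantitative control of $\eta_0(\varepsilon)$ as $\varepsilon\to\delta_0^+$, the rate $c/(\delta-\delta_0)^2\log(1/(\delta-\delta_0))$ in that range.

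The chief obstacle is the intermediate window $\delta\in(\delta_0,3\delta_0]$, where the factor $\delta/3$ in the GMN upper bound causes $\eta(\delta/3)$ to vanish. Closing the gap would require either sharpening the GMN corona construction to remove the factor $1/3$ for this specific $B$, or carrying out a direct layer--by--layer inversion that exploits the product structure $B=\prod_{n\ge 0}B_{\alpha,\beta^n\rho}$: each factor is a nearly interpolating Blaschke product on a horizontal line, and the exponential spacing governed by $\beta$ should permit summing the partial inverses via a geometric series. The precise rate $(\delta-\delta_0)^{-2}\log(1/(\delta-\delta_0))$ would then come from tracking $\eta_0(\varepsilon)$ to leading order as $\varepsilon\to\delta_0^+$ and inserting into the resulting refined bound.
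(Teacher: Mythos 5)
Your first half---showing $c_1(\delta)=\infty$ below $1/\sqrt{1+2\alpha^2}$---is essentially the paper's own argument (Lemma~\ref{l6}): test at the points $v_n$ of Remark~2, where $\inf_{\lambda\in\sigma(B)}|b_\lambda(v_n)|=\delta_1$ while $|B(v_n)|\to0$ by the upper estimate of Lemma~\ref{l4}; the paper makes your ``pointwise construction'' explicit by taking $f_n=b_{v_n}$ and estimating any Bezout solution of $g_nf_n+Bh_n=1$ via $\|g_n\|_\infty\ge\|h_n\|_\infty-1\ge1/|B(v_n)|-1$. That part is sound.

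The second half has a genuine gap, which you yourself flag: routing the upper bound through the GMN estimate $c_1(\delta)\le a\,\eta(\delta/3)^{-2}\log\bigl(1/\eta(\delta/3)\bigr)$ cannot work for this $B$, since $\eta(\eps)=0$ for every $\eps<\delta_1$, so the bound is vacuous on the whole window $\delta\in(\delta_1,3\delta_1]$; worse, because $\delta_1=1/\sqrt{1+2\alpha^2}>1/3$ whenever $\alpha<2$, in that case it yields nothing for any $\delta\le1$. (A secondary point: the ``Moreover'' inequality in the quoted GMN theorem is stated under WEP, i.e.\ under $\delta_1=0$, so even its applicability here would need justification.) The paper closes exactly this gap by not using the abstract $\eta(\cdot)$ machinery at all: in Lemma~\ref{l5} one fixes the individual $f$ with $\delta\le|f(\lambda)|\le\|f\|_\infty\le1$, chooses $\eps$ with $\delta_1<\eps<\delta$, and applies the Schwarz--Pick lemma on each disk $\{z\colon|b_\lambda(z)|<\eps\}$, which by Lemma~\ref{l3} cover the half-plane $\Pi_{\alpha,\rho}$, to get $|f(z)|\ge(\delta-\eps)/(1-\delta\eps)$ there; the lower estimate of Lemma~\ref{l4} bounds $|B|$ from below on the complementary strip. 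Then Carleson's corona theorem applied to the pair $(f,B)$ with corona constant $\eta=\min\{\,\cdot\,,\,\cdot\,\}$ gives $\|1/f\|_{H^\infty\!/BH^\infty}\le c\,\eta^{-2}\log(1/\eta)$, which is the stated bound with the $(\delta-\delta_1)$ dependence and no factor-of-three loss. The missing idea is precisely this per-function corona argument (transferring the lower bound on $|f|$ from $\sigma(B)$ to the covering disks), not a sharpening of the GMN constant or a layer-by-layer inversion; without it your proposal establishes only the lower half of the threshold identity and neither the finiteness of $c_1(\delta)$ on $(\delta_1,1]$ nor the quantitative estimate.
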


The proof of the Theorem is contained in two following lemmata, where
$\delta_1$ means simply the number $\frac1{\sqrt{1+2\alpha^2}}$. After proving
these lemmata we can conclude that $\delta_1=\delta_1(H^\infty/BH^\infty)$.

\begin{lemma}
\label{l5} Let $\delta>\delta_1$. Then
$$
c_1(\delta)\le\max\big\{\frac{c}{(\delta-\delta_1)^2}\log\frac1{\delta-\delta_1},
\;C\big\}
$$
for some an absolute constant $c$ and another constant $C=C(\delta_1)$.
\end{lemma}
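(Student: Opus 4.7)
The plan is to reduce Lemma~\ref{l5} to the Carleson--Uchiyama two-function corona estimate via a pointwise lower bound for $|f|+|B|$ on $\C_+$. By Lemma~\ref{1.1}, it suffices, given $f\in H^\infty$ with $\|f\|_\infty\le1$ and $|f(\lambda)|\ge\delta$ on $\sigma=\sigma(B)$, to produce $g,h\in H^\infty$ with $fg+Bh=1$ and $\|g\|_\infty$ satisfying the stated bound. Uchiyama's estimate supplies such a pair with $\|g\|_\infty\le c_0\eta^{-2}\log(1/\eta)$ (absolute $c_0$) whenever $|f(z)|+|B(z)|\ge\eta>0$ throughout $\C_+$, so the task reduces to exhibiting the right $\eta$.

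I split $\C_+$ along the line $\im z=c_\alpha/\rho$, where $c_\alpha:=\alpha\sqrt{1+\alpha^2}/(\sqrt{1+\alpha^2}+1)$ is precisely the lower boundary of $\Pi_{\alpha,\rho}$. On the upper region $\Pi_{\alpha,\rho}$, Lemma~\ref{l3} combined with Remark~1 (which upgrades the strict inequality of Lemma~\ref{l3} to a non-strict one by taking the infimum over $\eps>\delta_1$) furnishes, for every $z$, a zero $\lambda\in\sigma$ with $|b_\lambda(z)|\le\delta_1=1/\sqrt{1+2\alpha^2}$. The Schwarz--Pick lemma applied to $f$ then forces $f(z)$ into the closed pseudohyperbolic disk of radius $\delta_1$ about $f(\lambda)$; since $|f(\lambda)|\ge\delta>\delta_1$, the point of that disk nearest the origin has modulus $\eta_1:=(\delta-\delta_1)/(1-\delta\delta_1)$, so $|f(z)|\ge\eta_1$ throughout $\Pi_{\alpha,\rho}$.

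In the complementary strip $0<\im z<c_\alpha/\rho$ I use Lemma~\ref{l4}. The right-hand side of its lower bound for $|B(x+iy)|$ is a function of the product $\rho y$ (the outer $\rho$ and the $\rho$ in the exponent cancel) and is monotone in $y$ on $(0,\alpha/\rho)$, so evaluating at $y=c_\alpha/\rho$ gives $|B(z)|\ge\eta_0$, where $\eta_0$ depends only on $\alpha$, hence only on $\delta_1$. Assembling the two regimes, $|f(z)|+|B(z)|\ge\eta:=\min(\eta_1,\eta_0)$ on all of $\C_+$.

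Plugging $\eta$ into Uchiyama's bound yields $c_1(\delta)\le c_0\eta^{-2}\log(1/\eta)$. If $\eta_0\le\eta_1$, the resulting estimate depends only on $\delta_1$, and is absorbed in the constant $C(\delta_1)$. Otherwise $\eta=\eta_1\ge\delta-\delta_1$ (because $1-\delta\delta_1\le1$), whence $\eta^{-2}\log(1/\eta)\le(\delta-\delta_1)^{-2}\log(1/(\delta-\delta_1))$, giving the claimed bound with $c$ absolute; taking the maximum of the two cases yields the Lemma. The one conceptually nontrivial step is the Schwarz--Pick transfer from control on $\sigma$ to a pointwise lower bound for $|f|$ on $\Pi_{\alpha,\rho}$, which fundamentally exploits $\delta>\delta_1$; everything else is packaging of the preceding lemmas together with the standard two-function corona estimate.
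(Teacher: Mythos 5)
Your proposal is correct and takes essentially the same route as the paper: split $\C_+$ along $\im z=\alpha\sqrt{1+\alpha^2}/\rho(\sqrt{1+\alpha^2}+1)$, bound $|B|$ from below on the lower strip via Lemma~\ref{l4}, bound $|f|$ from below on $\Pi_{\alpha,\rho}$ via Lemma~\ref{l3} plus the Schwarz--Pick lemma, and feed $\eta=\min(\eta_0,\eta_1)$ into the corona estimate with the same two-case analysis. Your limiting argument in $\eps$ (letting $\eps\downarrow\delta_1$) is in fact a slightly cleaner justification of the bound $|f(z)|\ge(\delta-\delta_1)/(1-\delta\delta_1)$ than the paper's displayed chain, which fixes one $\eps\in(\delta_1,\delta)$.
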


\begin{proof}
First we check that the function $|f(z)|+|B(z)|$ can be separated from zero by
some constant $\eta$ depending on $\alpha$ and $\delta$ only. By Lemma~\ref{l4}
in the strip
$$
0<y\rho\le\frac{\alpha\sqrt{1+\alpha^2}}{\sqrt{1+\alpha^2}+1}
$$
we have the estimate
$$
|B(x+iy)|\ge\exp\left\{-\frac{\alpha\sqrt{1+\alpha^2}\left(e^{\pi\alpha}+1\right)}
{2\left(e^{\frac{\pi\alpha}{\sqrt{1+\alpha^2}+1}}+1\right)}\right\}\,.
$$
Now we check that $|f(z)|$ is separated from zero in the half-plane
$$
y\rho\ge\frac{\alpha\sqrt{1+\alpha^2}}{\sqrt{1+\alpha^2}+1}\,.
$$
Fix any $\eps$, $\delta>\eps>\delta_1$. By Lemma~\ref{l3}
$$
\left\{z\colon\im
z\ge\frac{\alpha\sqrt{1+\alpha^2}}{\rho\big(\sqrt{1+\alpha^2}+1\big)}\right\}\subset
\bigcup_{\lambda\in \sigma(B)}\big\{z\colon|b_\lambda(z)|<\eps\big\}\,,
$$
and therefore it is enough to verify that $f$ is separated from zero on each
disk $\{z\colon|b_\lambda(z)|<\eps\}$, $\lambda\in \sigma(B)$, uniformly with
respect to $\lambda$.

By the Schwarz lemma we have
$$
\left|\frac{f(z)-f(\lambda)}{1-\overline{f(\lambda)}f(z)}\right|\le|b_\lambda(z)|,
$$
i.~e., for all point $z$ of the disk $\{z\colon|b_\lambda(z)|<\eps\}$ we have
$$
\left|\frac{f(z)-f(\lambda)}{1-\overline{f(\lambda)}f(z)}\right|<\eps.
$$
Rewriting the inequality
$$
\left|\frac{a+b}{1+\bar ab}\right|\le\frac{|a|+|b|}{1+|a|\,|b|}
$$
(which is, in fact, the triangle inequality for the hyperbolic metric for the
points $a$, $b$, and $0$) in the form
$$
|b|\ge\frac{|a|-\left|\frac{a+b}{1+\bar ab}\right|}
{1-|a|\left|\frac{a+b}{1+\bar ab}\right|}
$$
with $a=f(\lambda)$ and $b=-f(z)$ we get
$$
|f(z)|\ge\frac{\delta-\eps}{1-\delta\eps}>
\frac{\delta-\delta_1}{1-\delta\delta_1}\,.
$$

Therefore, in the whole half-plane we have
$$
|f(z)|+|B(z)|\ge\eta\,,
$$
where
\begin{equation}
\label{eta}
\eta=\min\left\{\exp\bigg[-\frac{\alpha\sqrt{1+\alpha^2}\left(e^{\pi\alpha}+1\right)}
{2\Big(e^{\frac{\pi\alpha}{\sqrt{1+\alpha^2}+1}}+1\Big)}\;\bigg],\
\frac{\delta-\delta_1}{1-\delta\delta_1}\right\}\,.
\end{equation}

Finally, by the Carleson corona theorem (see, e.g.~\cite{Nik2}), we know that
there exists a solution $h$ of the Bezout equation $fh+Bg=1$ with a norm
estimate
$$
\|h\|_\infty\le\frac c{\eta^2}\log\frac1\eta\,,
$$
which means that
$$
c_1(\delta)\le\frac c{\eta^2}\log\frac1\eta\,.
$$

Recall that $\delta_1=\frac1{\sqrt{1+2\alpha^2}}$. If the first term
in~\eqref{eta} is less than the second one, we have
$$
\eta=\eta(\delta_1)=\exp\bigg[-\frac{\alpha\sqrt{1+\alpha^2}
\left(e^{\pi\alpha}+1\right)}
{2\Big(e^{\frac{\pi\alpha}{\sqrt{1+\alpha^2}+1}}+1\Big)}\;\bigg],
$$
and we can put
$$
C(\delta_1)=\frac c{\eta^2(\delta_1)}\log\frac1{\eta(\delta_1)}\,.
$$
If the second term is smaller, we have
$$
c_1(\delta)\le\frac{c}{(\delta-\delta_1)^2}\log\frac1{\delta-\delta_1}\,.
$$
\end{proof}

\begin{lemma}
\label{l6} Let $\delta\le\delta_1$. Then $c_1(\delta)=+\infty$.
\end{lemma}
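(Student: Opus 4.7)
The plan is to exhibit, for each large $n$, an admissible test function $f_n\in H^\infty$ whose inverse modulo $B$ has arbitrarily large quotient norm; by Lemma~\ref{1.1} this forces $c_1(\delta)=+\infty$. I would take $f_n=b_{v_n}$, with $v_n\in\C_+$ the imaginary-axis gap point from Remark~2. Then $\|f_n\|_\infty=1$, and Remark~2 places $v_n$ outside every pseudohyperbolic disk of radius $\delta_1$ centered at a zero of $B$; by the symmetry $|b_{v_n}(\lambda)|=|b_\lambda(v_n)|$ of the pseudohyperbolic metric this gives $|f_n(\lambda)|\ge\delta_1\ge\delta$ for every $\lambda\in\sigma$, so $f_n$ is admissible.

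The core estimate is
\[
\Big\|\frac1{f_n}\Big\|_{H^\infty/BH^\infty}=\inf\bigl\{\|g\|_\infty:g,h\in H^\infty,\;gb_{v_n}+hB=1\bigr\}\ge\frac1{|B(v_n)|}-1.
\]
Given any such pair $(g,h)$, evaluating at $v_n$ (where $b_{v_n}$ vanishes) produces $h(v_n)=1/B(v_n)$, whence $\|h\|_\infty\ge 1/|B(v_n)|$ by the maximum modulus principle. Since $b_{v_n}$ and $B$ are inner, $|b_{v_n}|=|B|=1$ almost everywhere on $\mathbb{R}$, so the relation $gb_{v_n}=1-hB$ gives $|g(x)|=|1-h(x)B(x)|$ a.e.\ on $\mathbb{R}$. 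The reverse triangle inequality $|1-h(x)B(x)|\ge|h(x)|-1$ holds on $\{x:|h(x)|\ge 1\}$, and since $\|h\|_\infty\ge 1/|B(v_n)|\ge 1$ this set carries the essential supremum of $|h|$; hence $\|g\|_\infty\ge\|h\|_\infty-1$, and the claimed bound follows.

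Finally, Lemma~\ref{l4} applied at $z=v_n$ (which lies in $\{y>\alpha/\rho\}$ for $n$ large enough) gives $|B(v_n)|\le(\cosh\pi\alpha)^{-N_n}$ with $N_n\to\infty$, so $|B(v_n)|\to 0$ as $n\to\infty$; therefore $\|1/f_n\|_{H^\infty/BH^\infty}\to+\infty$, which combined with Lemma~\ref{1.1} yields $c_1(\delta)=c_1(\delta,H^\infty/BH^\infty)=+\infty$ for every $\delta\le\delta_1$. The delicate step is the passage from the pointwise boundary identity $|g|=|1-hB|$ on $\mathbb{R}$ to the uniform estimate $\|g\|_\infty\ge\|h\|_\infty-1$; it rests on the unimodularity of the inner functions $b_{v_n}$ and $B$ on the real line together with $\|h\|_\infty\ge 1$ (automatic from $|B(v_n)|\le 1$).
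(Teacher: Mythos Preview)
Your argument is correct and follows the paper's proof essentially line by line: the same test functions $f_n=b_{v_n}$, the same admissibility check via Remark~2, the same evaluation at $v_n$ to get $\|h\|_\infty\ge 1/|B(v_n)|$, and the same appeal to Lemma~\ref{l4} for $|B(v_n)|\to0$. The only cosmetic difference is in deriving $\|g\|_\infty\ge\|h\|_\infty-1$: the paper multiplies by $\bar B$ on the boundary to write $\|1-Bh\|_\infty=\|h-\bar B\|_\infty\ge\|h\|_\infty-1$ in one stroke, whereas you reach the same bound by a pointwise reverse triangle inequality on $\{|h|\ge1\}$; both are equivalent and your worry about this step being ``delicate'' is unfounded.
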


\begin{proof}
Consider a sequence of points $v_n$ from Remark~2 and put $f_n=b_{v_n}$. As it
was mentioned in Remark~2,
$$
|b_{v_n}(\lambda)|\ge\frac1{\sqrt{1+2\alpha^2}}=\delta_1\ge\delta\qquad
\forall\lambda\in \sigma(B)\,.
$$
We would like to estimate from below the $H^\infty$ norm of a solution $g_n$ of
the Bezout equation $g_nf_n+B h_n=1$. Since
$$
\|g_n\|_\infty=\|1-Bh_n\|_\infty=\|h_n-\bar B\|_\infty\ge\|h\|_\infty-1
$$
and
$$
\|h_n\|_\infty\ge|h_n(v_n)|=\frac1{|B(v_n)|}\,,
$$
by the estimate of Lemma~\ref{l4} we obtain
$$
\|g_n\|_\infty\to\infty\,,
$$
what yields $c_1(\delta)=+\infty$.
\end{proof}

{\bf Remark 3.} Taking an arbitrary $\delta$, $\delta<\delta_1$, and using the
above construction, it is easy to construct a function $f$ with the properties
$\|f\|_\infty\le1$, $|f(\lambda)|\ge\delta$ for every $\lambda\in\sigma$, which
is not invertible in $H^\infty/BH^\infty$, so that there is no bounded solution
$g,h$ to the Bezout equation $gf+Bh=1$. Indeed, it is sufficient to take for
$f$ a product of the factors $b_{v_n}$ with sufficiently rare subsequence of
zeroes $v_n$ to ensure the condition $|f(\lambda)|\ge\delta$. However, for the
Blaschke product $B$ from Theorem~\ref{main}, we do not know whether there
exists such a function in the case $\delta=\delta_1$. In order to guarantee
this property, i.~e., to have a noninvertible element $f$ of the algebra
$H^\infty/BH^\infty$ with $\delta_1\le f(\lambda)\le\|f\|_\infty\le1$
($\lambda\in\sigma(B)$), we need a Blaschke product $B$ with more sophisticated
zero set, which will be exhibited in the following theorem.

\begin{theorem}
\label{modif} For an arbitrary fixed number $\delta_1$ from $(0,1)$ there
exists a Blaschke product $B$ such that

$1)$ $c_1(\delta,H^\infty/BH^\infty)<\infty$ for every $\delta,$ $\delta_1<\delta\le1;$

$2)$ there exists a function $f$ satisfying
$\delta_1\le|f(\lambda)|\le\|f\|_\infty\le1$ for $\lambda\in\sigma(B),$ but
$\frac1f\notin H^\infty/BH^\infty$.
\end{theorem}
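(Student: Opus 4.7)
The plan is to refine the construction of Theorem~\ref{main} so that the threshold infimum is \emph{approached but not attained}. That is, we aim to build $B$ with $\delta_1(H^\infty/BH^\infty)=\delta_1$ that enjoys the strict property $\inf_{\lambda\in\sigma(B)}|b_\lambda(z)|>\delta_1$ at every individual $z\in\C_+$, while still having a sequence $(w_n)\subset\C_+$ with $|B(w_n)|\to0$ and $\min_{\lambda}|b_\lambda(w_n)|\searrow\delta_1$. Given such a $B$, the non-invertible witness $f$ will be an infinite Blaschke product supported on a sparse subsequence of the $w_n$'s.

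\smallskip\noindent\textbf{Step 1 (Constructing $B$).} Begin with the Blaschke product $B_0$ of Theorem~\ref{main} at the parameter $\alpha$ satisfying $\delta_1=1/\sqrt{1+2\alpha^2}$. Its critical picture at each $v_n$ involves exactly four zeros at pseudohyperbolic distance $\delta_1$. Modify $B_0$ by adjoining further Blaschke factors --- for instance, appropriately scaled Frostman shifts $B_{\alpha,\beta^n\rho,c_n}$ of the level products, with a vanishing sequence $c_n\to0$ --- to perturb the 4-fold configuration: the new critical points $w_n$ satisfy $\min_{\lambda\in\sigma(B)}|b_\lambda(w_n)|=\delta_1+\eps_n>\delta_1$ with $\eps_n\to0$. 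The perturbations must be small and geometrically consistent with the scaling $\beta^n\rho$, so that the half-plane covering argument of Lemma~\ref{l3} persists for every $\eps>\delta_1$; no new critical configurations at distance $\le\delta_1$ appear, keeping $\delta_1(H^\infty/BH^\infty)=\delta_1$. The upper estimate of Lemma~\ref{l4} still forces $|B(w_n)|\to0$ as $\im w_n\to\infty$ since added factors only decrease $|B|$.

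\smallskip\noindent\textbf{Step 2 (Part 1).} For $\delta>\delta_1$ the proof of Lemma~\ref{l5} transfers verbatim to $B$: a multiplicative lower estimate for $|B|$ on a strip near the real line (the perturbed analogue of Lemma~\ref{l4}) combined with the pseudohyperbolic $\eps$-disk covering of the remaining half-plane (the analogue of Lemma~\ref{l3}, valid for any $\eps>\delta_1$) produces $|f|+|B|\ge\eta(\delta)>0$ uniformly. The Carleson corona theorem then delivers a bounded Bezout solution with the claimed norm bound, giving $c_1(\delta,H^\infty/BH^\infty)<\infty$.

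\smallskip\noindent\textbf{Step 3 (Part 2).} Choose a subsequence $\{w_{n_k}\}$ so sparse in the pseudohyperbolic metric of $\C_+$ that for every $k$ and every $\lambda\in\sigma(B)$,
\[
\prod_{\ell\ne k}|b_{w_{n_\ell}}(\lambda)|\;\ge\;\frac{\delta_1}{\delta_1+\eps_{n_k}}.
\]
This is achievable because, for any $\lambda$ near $w_{n_k}$, the other points $w_{n_\ell}$ are driven far from $\lambda$, so the cross-term factors approach $1$; uniform control follows from a Blaschke-sum sparsity estimate on $\sum_\ell(1-|b_{w_{n_\ell}}(\lambda)|)$. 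Set $f=\prod_k b_{w_{n_k}}$; it is an inner function with $\|f\|_\infty\le1$. At any $\lambda\in\sigma(B)$, letting $k$ be the index giving the smallest factor (for which $|b_{w_{n_k}}(\lambda)|\ge\delta_1+\eps_{n_k}$ by Step~1),
\[
|f(\lambda)|=\prod_j|b_{w_{n_j}}(\lambda)|\;\ge\;(\delta_1+\eps_{n_k})\cdot\frac{\delta_1}{\delta_1+\eps_{n_k}}=\delta_1.
\]
Meanwhile $f(w_{n_k})=0$ and $|B(w_{n_k})|\to0$, so $\inf_z(|f(z)|+|B(z)|)=0$, the corona condition fails, and $1/f\notin H^\infty/BH^\infty$.

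\smallskip\noindent\textbf{Main obstacle.} The crux is Step~1: the perturbation must simultaneously (i) strictly enlarge the distance from each critical point to $\sigma(B)$ above $\delta_1$, (ii) preserve the asymptotic decay $|B(w_n)|\to0$, and (iii) not generate any new critical configuration at distance $\le\delta_1$ elsewhere. Balancing the vanishing rate $\eps_n$ against the sparsity required in Step~3 (which must be fast enough for the cross-term product to exceed $\delta_1/(\delta_1+\eps_{n_k})$ uniformly on $\sigma(B)$) is the delicate quantitative ingredient on which the whole construction rests.
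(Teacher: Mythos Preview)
Your high-level plan matches the paper's: build $B$ so that the critical points $w_n$ satisfy $\min_{\lambda\in\sigma(B)}|b_\lambda(w_n)|=\delta_1+\eps_n$ with $\eps_n\searrow0$, then take $f$ to be a sparse Blaschke product over the $w_n$. The paper's Step~3-style sparsification (your condition $\prod_{\ell\ne k}|b_{w_{n_\ell}}(\lambda)|\ge\delta_1/(\delta_1+\eps_{n_k})$) is exactly the right idea and corresponds to the paper's inductive condition~\eqref{dist}.

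However, your Step~1 contains a genuine error. You propose to obtain $B$ by \emph{adjoining further Blaschke factors} to the $B_0$ of Theorem~\ref{main}. But enlarging $\sigma(B)$ can only \emph{decrease} $\min_{\lambda\in\sigma(B)}|b_\lambda(z)|$ at every point $z$, never increase it. In particular the old critical points $v_n$ still have the four original zeros of $B_0$ at pseudohyperbolic distance exactly $\delta_1$, so $\min_{\lambda\in\sigma(B)}|b_\lambda(v_n)|\le\delta_1$, and since $|B(v_n)|\le|B_0(v_n)|\to0$, those $v_n$ remain critical at level $\le\delta_1$. Worse, by Remark~2 and Lemma~\ref{l3}, within the half-plane $\Pi_{\alpha,\rho}$ every point already satisfies $\min_\lambda|b_\lambda(z)|\le\delta_1$ for $B_0$; adjoining zeros cannot create any point with $\min>\delta_1$ there. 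So no sequence $(w_n)$ with the properties you require can exist for a $B$ obtained this way.

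The paper resolves this by \emph{rebuilding} $B$ rather than augmenting $B_0$: it takes an increasing sequence $\alpha_n\uparrow\alpha$ and sets
\[
B=\prod_{n\ge1}\prod_{m=0}^{m_n-1}B_{\alpha_n,\beta_n^m\rho_n},
\]
placing the zeros of $f$ at the seams $v_n$ between the $\alpha_n$- and $\alpha_{n+1}$-blocks. The nearest zeros to $v_n$ then sit at pseudohyperbolic distance $1/\sqrt{1+2\alpha_n^2}>\delta_1$, supplying precisely the slack $\eps_n$ you need. The verification of part~1) also does not transfer ``verbatim'' from Lemma~\ref{l5}: because the $\alpha_n$ vary, the paper splits $B=B'B''$ with $B'$ a finite (hence interpolating) product handled via the Carleson condition, and $B''$ controlled by a monotonicity comparison with the equal-$\alpha$ case. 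Your outline would need an analogous argument once Step~1 is repaired.
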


\begin{proof} {\bf Step 1.}
We start with an arbitrary bounded increasing sequence of positive number
$\alpha_n$ with $\alpha=\lim\alpha_n$, $\delta_1\df\frac1{\sqrt{1+2\alpha^2}}$.
Our Blaschke product $B$ will be of the form
$$
B(z)=\prod_{n=1}^\infty\prod_{m=0}^{m_n-1}B_{\alpha_n,\beta_n^m\rho_n}(z)\,,
$$
where
$$
\beta_n=\frac{\sqrt{1+\alpha_n^2}-1}{\sqrt{1+\alpha_n^2}+1}
$$
and
$$\rho_{n+1}=\rho_n\beta_n^{m_n}
\frac{\sqrt{1+\alpha_n^2}+1}{\alpha_n\sqrt{1+\alpha_n^2}}\cdot
\frac{\alpha_{n+1}\sqrt{1+\alpha_{n+1}^2}}{\sqrt{1+\alpha_{n+1}^2}+1}\,.
$$
The initial value $\rho_1=\rho$ can be taken arbitrarily. The claimed
noninvertible function $f$ will be the following Blaschke product
$$
f(z)=\prod_{n=0}^\infty b_{v_n}(z)\,,
$$
where
$$
v_n=\frac1{\rho_{n+1}}\cdot\frac{\alpha_{n+1}\sqrt{1+\alpha_{n+1}^2}}
{\sqrt{1+\alpha_{n+1}^2}+1}\,i=
\frac1{\rho_n\beta_n^{m_n-1}}\cdot\frac{\alpha_n\sqrt{1+\alpha_n^2}}
{\sqrt{1+\alpha_n^2}-1}\,i\,,
$$
i.~e., we put the root $v_n$ on the common boundary of the last strip defined
by $\alpha_n$ and the first strip defined by $\alpha_{n+1}$. So, the only
parameters, which are in our disposition, are the numbers $m_n$ of strips of
equal hyperbolic width or, in other words, the distances between the neighbor
roots $v_n$. We subordinate these distance to the following condition
\begin{equation}
\label{dist}
\Big|\frac{v_l-v_k}{v_l+v_k}\Big|\ge\left(\delta\sqrt{1+2\alpha_{l+1}^2}\right)^{2^{-k}}\,.
\end{equation}

If we take any zero $\lambda$ of the Blaschke product $B$ with $\im
v_{n-1}<\im\lambda<\im v_n$, then
\begin{align*}
|f(\lambda)|&=\prod_{k=0}^\infty|b_{v_k}(\lambda)|=\prod_{k=0}^{n-2}|b_{v_k}(\lambda)|
\cdot|b_{v_{n-1}}(\lambda)b_{v_n}(\lambda)|\cdot\prod_{k=n+1}^\infty|b_{v_k}(\lambda)|
\\
&\ge\prod_{k=0}^{n-2}|b_{v_k}(v_{n-1})|\cdot|b_{v_{n-1}}(\lambda)b_{v_n}(\lambda)|\cdot
\prod_{k=n+1}^\infty|b_{v_k}(v_n)|
\\
&\ge\prod_{k=0}^{n-2}\left(\delta\sqrt{1+2\alpha_n^2}\right)^{2^{-k}}\!\! \cdot
|b_{v_{n-1}}(\lambda)b_{v_n}(\lambda)|\cdot
\prod_{k=n+1}^\infty\left(\delta\sqrt{1+2\alpha_{n+1}^2}\right)^{2^{-k}}
\\
&\ge\left(\delta\sqrt{1+2\alpha_n^2}\right)^{1-3\cdot2^{-n}}\!\!\!\cdot
|b_{v_{n-1}}(\lambda)b_{v_n}(\lambda)|\,.
\end{align*}
Thus, would we guarantee for any root $\lambda$ in the strip between $v_{n-1}$
and $v_n$ the estimate
\begin{equation}
\label{2terms} |b_{v_{n-1}}(\lambda)b_{v_n}(\lambda)|\ge
\frac{\quad\left(\delta\sqrt{1+2\alpha_n^2}\right)^{3\cdot2^{-n}}}{\sqrt{1+2\alpha_n^2}}\,,
\end{equation}
we will immediately obtain the required estimate for $f$:
$|f(\lambda)|\ge\delta$.

{\bf Step 2.} We shall construct the roots $v_n$ by induction. Assume that all
$v_k$ for $k<n$ are already fixed and we need to choose $v_n$. First of all we
have to take $v_n$ far enough from the preceding roots in order to
satisfy~\eqref{dist} for $k=n$ and all $l<n$ as well as for $l=n$ and all
$k<n$.

Note that we need to check condition~\eqref{2terms} only for the roots
$\lambda$ of $B$ with positive real part and the nearest to the imaginary axis,
i.~e., for $\lambda=(1+i\alpha_n)/\rho_n\beta_n^m$, because the hyperbolic
distance between all other $\lambda$ with positive real part and any $v_k$ is
strictly larger, but the consideration for $\lambda$ with negative real part
can be omitted due to the symmetry.

Now, we would like to reduce the problem to the case of two roots  of $B$ only,
the nearest roots to one of the zeroes of $f$, either $v_{n-1}$ or $v_n$,
i.~e., for $m=0$ and $m=m_n-1$.

For the root $\lambda=(1+i\alpha_n)/\rho_n$, we have
$$
|b_{v_{n-1}}(\lambda)|=\frac1{\sqrt{1+2\alpha_n^2}}\,,
$$
and hence~\eqref{2terms} turns into
\begin{equation}
\label{dop1}
|b_{v_n}(\lambda)|\ge\left(\delta\sqrt{1+2\alpha_n^2}\right)^{3\cdot2^{-n}}.
\end{equation}
For the root $\lambda=(1+i\alpha_n)/\rho_n\beta_n^{m_n-1}$ we have
$$
|b_{v_n}(\lambda)|=\frac1{\sqrt{1+2\alpha_n^2}}\,,
$$
therefore~\eqref{2terms} turns into
\begin{equation}
\label{dop2}
|b_{v_{n-1}}(\lambda)|\ge\left(\delta\sqrt{1+2\alpha_n^2}\right)^{3\cdot2^{-n}}\,.
\end{equation}
In fact, both~\eqref{dop1} and~\eqref{dop2} follow from~\eqref{dist}, however
we do not want to enter into these additional estimations and simply
add~\eqref{dop1}--\eqref{dop2} to the list of requirements for the inductive
choice of $m_n$. Now, we check that~\eqref{dop1}--\eqref{dop2} are fulfilled,
as well as~\eqref{dist}, for $m_n$ sufficiently large.

Let us consider the behavior of the function $\phi_a(t)$,
$$
\phi_a(t)\df|b_{ia}\big((1+i\alpha_n)t\big)|^2=
\frac{(a-\alpha_nt)^2+t^2}{(a+\alpha_nt)^2+t^2}\,.
$$
Since
$$
\frac{\phi'_a(t)}{\phi_a(t)}=\frac{4\alpha_na[t^2(1+\alpha_n^2)-a^2]}
{[t^2(1+\alpha_n^2)+a^2]^2-4\alpha_n^2a^2t^2}\,,
$$
the function $\phi_a$ monotonously decreases from 1 to its minimal value, when
$t$ changes from 0 to $a/\sqrt{1+\alpha_n^2}$, and then increases tending again
to 1 as $t\to\infty$. If we consider the product of two such functions
$\phi_a(t)\phi_b(t)$ with sufficiently large $b/a$, then it is clear that this
product has two local minima: first of them tends to $a/\sqrt{1+\alpha_n^2}$
monotonously decreasing as $b\to\infty$, and the second one tends to
$b/\sqrt{1+\alpha_n^2}$ monotonously increasing as $a\to0$.

We apply these arguments to our requirement~\eqref{2terms}. To this end, we set
$$
a=|v_{n-1}|=\frac{\alpha_n\sqrt{1+\alpha_n^2}}{\rho_n(\sqrt{1+\alpha_n^2}+1)}\,,\qquad
b=|v_n|=\frac{\alpha_n\sqrt{1+\alpha_n^2}}{\rho_n\beta_n^{m_n}(\sqrt{1+\alpha_n^2}+1)}\,,
$$
and
$$
\lambda=\frac{1+i\alpha_n}{\rho_n\beta_n^m}\,,
$$
and will compare the values of our function at the points
$t=t_m=1/\rho_n\beta_n^m$, $m=0,\ldots,m_n-1$, in order to guarantee that the
minimal value is attained either for $m=0$ or for $m=m_n-1$, where by our
assumption either~\eqref{dop1} or~\eqref{dop2} is fulfilled. To this aim, we
note that
$$
t_0=\frac1{\rho_n}>\frac{\alpha_n}{\rho_n(\sqrt{1+\alpha_n^2}+1)}=
\frac{a}{\sqrt{1+\alpha_n^2}}\,,
$$
and therefore, for $m_n$ sufficiently large, the point of the minimum is less
then $t_0$ and the function $\phi_a(t)\phi_b(t)$ is increasing at $t_0$.
Symmetrically,
$$
t_{m_n-1}=\frac1{\rho_n\beta_n^{m_n-1}}<
\frac{\alpha_n}{\rho_n\beta_n^{m_n-1}(\sqrt{1+\alpha_n^2}-1)}=
\frac{b}{\sqrt{1+\alpha_n^2}}\,,
$$
and therefore, for $m_n$ sufficiently large, the point of the minimum is bigger
then $t_{m_n-1}$ and the function $\phi_a(t)\phi_b(t)$ is decreasing at
$t_{m_n-1}$. It follows that conditions~\eqref{2terms} is fulfilled for $m_n$
large enough.

Thus, we have proved that conditions~\eqref{dist} and~\eqref{2terms} are
fulfilled if the sequence $m_n$ increases fast enough. Therefore, the
construction of the Blaschke product $B$ and a function $f$ such that
$\|f\|_\infty=1$, $|f(\lambda)|\ge\delta_1$ for all $\lambda$,
$\lambda\in\sigma(B)$, is completed. The function $f$ represents a
noninvertible element of $H^\infty/BH^\infty$. Indeed, if we assume that $f$ is
invertible in $H^\infty/BH^\infty$, i.~e., there exist two $H^\infty$-functions
$g$ and $h$ such that $fg+Bh=1$, then we come to a contradiction, because
$$
\lim_{n\to\infty}\big(f(v_n)g(v_n)+B(v_n)h(v_n)\big)=0\,.
$$

So, we have finished the proof of the second statement of the Theorem.

{\bf Step 3.} In order to complete the proof, we need to check that any other
function $f$ satisfying conditions $\|f\|_\infty=1$, $|f(\lambda)|\ge\delta$
for all $\lambda$, $\lambda\in \sigma(B)$, and for arbitrary $\delta$,
$\delta>\delta_1$, represents an invertible element of $H^\infty/BH^\infty$.

Fix such a $\delta$ and such a function $f$. We need to check that
\begin{equation}
\label{Bezout}
\inf_{\im z>0}(|f(z)|+|B(z)|)>0.
\end{equation}
As in the proof of Theorem~\ref{main}, we take an arbitrary $\eps$,
$\delta_1<\eps<\delta$, and split the upper half-plane in two parts:
$$
\Pi_\eps\df\cup_{\lambda\in \sigma(B)}\{z\colon |b_\lambda(z)|\le\eps\}
$$
and its complement $\Pi_\eps^c$. We will check that $f$ is bounded away from
zero on $\Pi_\eps$ and $B$ does it on $\Pi_\eps^c$.

If $|b_\lambda(z)|\le\eps$, then by Schwarz' lemma
$$
\left|\frac{f(z)-f(\lambda)}{1-\overline{f(\lambda)}f(z)}\right|\le|b_\lambda(z)|\le\eps\,,
$$
and again, as in the proof of Lemma~\ref{l5}, using the triangle inequality in the form
$$
|b|\ge\frac{|a|-\left|\frac{a+b}{1+\bar ab}\right|}
{1-|a|\left|\frac{a+b}{1+\bar ab}\right|}
$$
with $a=f(\lambda)$ and $b=-f(z)$ we get
$$
|f(z)|\ge\frac{\delta-\eps}{1-\delta\eps}
$$
for arbitrary $z$ from $\Pi_\eps$.

On the complement $\Pi_\eps^c$, we estimate $|B(z)|$ splitting the product $B$
into two subproducts $B=B'B''$. Namely, we fix a number $N$ so that
$\delta\sqrt{1+2\alpha_n^2}>1$ for $n\ge N$ and put
$$
B'=\prod_{n=1}^{N-1}\prod_{m=0}^{m_n-1}B_{\alpha_n,\beta_n^m\rho_n}\,, \qquad
B''=\prod_{n=N}^\infty\prod_{m=0}^{m_n-1}B_{\alpha_n,\beta_n^m\rho_n}\,.
$$
Note that the first product is an interpolating Blaschke product. Indeed, all
$B_{\alpha,\gamma}$ are interpolating, because due to the relation
$$
\Big(\frac B{b_\lambda}\Big)(\lambda)=2i\cdot\im\lambda\cdot
\frac{dB}{dz}(\lambda)
$$
we have
$$
B_k(z_k)=\frac{\pi\alpha}{\sinh \pi\alpha}\,,
$$
where $B_k=B_{\alpha,\gamma}/b_{z_k}$, $z_k=(2k+1+i\alpha)/\gamma$. Therefore
zeroes of $B'$ form a finite union of interpolating sets. Since they are
uniformly separated, the whole product $B'$ is interpolating as well. Using a
generalized form of the Carleson condition (see,
e.g.,~\cite{Nik1}--\cite{Nik2})
$$
|B'(z)|\ge c \inf_{\lambda\in \sigma(B')}|b_\lambda(z)|\,,
$$
we get $|B'(z)|\ge c\eps$ in $\Pi_\eps^c$. As to the second product $B''$, we
can use the estimate of Lemma~\ref{l4} for $\rho=\rho_N$. The estimate of
Lemma~\ref{l4} was obtained for the strips of equal hyperbolic width, but in
our situation the width of the strips decreases, because $\alpha_n$ is
increasing. This means that the hyperbolic distance from any point below the
first strip to the corresponding zero of $B_{\alpha_n,\gamma_{n,m}}$ is
strictly bigger than that distance in the case when all $\alpha_n$ are equal to
$\alpha$. Therefore, below the first strip, each factor
$|B_{\alpha_n,\gamma_{n,m}}|$ is strictly larger than in the equidistant case.
The whole half-plane
$$
\im z>\im v_{N-1}=\frac1{\rho_N}\cdot\frac{\alpha_N\sqrt{1+\alpha_N^2}}
{\sqrt{1+\alpha_N^2}+1}
$$
is in the set $\Pi_\eps$ by Lemma~\ref{l3}, therefore $B''$ is separated from
zero in the set $\Pi_\eps^c$, whence the whole $B$ is separated from zero on
$\Pi_\eps^c$. So, condition~\eqref{Bezout} is fulfilled what means the
invertibility of $f$ in the algebra $H^\infty/BH^\infty$, i.~e.,
$c(\delta)<\infty$ for any $\delta$, $\delta>1/\sqrt{1+2\alpha^2}$.
\end{proof}

\section{A version of the restricted invertibility conjecture}

\subsection{Bourgain--Tzafriri's restricted
invertibility theorem}

The following statement is known as {\it Bourgain--Tzafriri's restricted
invertibility theorem}.

\begin{theorem} \textup(\cite{BTz}\textup)
Whatever are a bounded operator $T$ on a Hilbert space $H$ and an orthogonal
basis $\{e_j\}_{j\in\N}$ satisfying $\inf_j\frac{\|Te_j\|}{\|e_j\|}>0$\textup,
there exists a subset $I\subset\N$ of positive upper density
$$
0<\overline\dens(I)\df\limsup_{n\to\infty}\frac{| I\cap \{1,2,\ldots,n\}|}n
$$
such that the restriction $T|H_I$ is left invertible\textup:
$$
\inf\{\|Tx\|\colon x\in H_I,\|x\|=1\}>0\,,
$$
where $H_I=\spn\{e_j\colon j\in I\}$.
\end{theorem}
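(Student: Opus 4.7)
The strategy is to reduce to a quantitative ``restricted invertibility'' statement for matrices with small diagonal, and then extract a subset of positive upper density via a probabilistic selection (Bernoulli selectors).

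After normalization we may assume $\|e_j\|=1$, $\|T\|\le 1$, and $c\df\inf_j\|Te_j\|>0$. Put $A=T^*T$; in the basis $\{e_j\}$ its diagonal entries are $A_{jj}=\|Te_j\|^2\ge c^2$, so we write $A=D+N$ with $D=\operatorname{diag}(A_{jj})$ and $N$ of zero diagonal, $\|N\|\le 2$. If we can produce $I\subset\N$ with $\overline\dens(I)>0$ such that the compression $P_INP_I$ (where $P_I$ is the orthogonal projection onto $H_I$) satisfies $\|P_INP_I\|\le c^2/2$, then for every $x\in H_I$
$$
\|Tx\|^2=\langle Ax,x\rangle\ge\bigl(c^2-\|P_INP_I\|\bigr)\|x\|^2\ge\tfrac{c^2}{2}\|x\|^2,
$$
so $T|H_I$ is bounded below. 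Everything therefore reduces to this small-norm compression problem for a zero-diagonal matrix $N$.

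The core step is probabilistic selection: take independent Bernoulli selectors $\xi_j\in\{0,1\}$ with $\operatorname{E}\xi_j=\delta$ for a small $\delta=\delta(c)>0$, and let $I=\{j:\xi_j=1\}$. For each $n$, estimate the even moment $\operatorname{E}\operatorname{tr}\bigl((P_{I\cap[1,n]}NP_{I\cap[1,n]})^{2p}\bigr)$ by expanding it as a sum over closed walks of length $2p$ in $\{1,\dots,n\}$ and exploiting the zero-diagonal hypothesis (which forces each step to change the vertex, thereby saving an extra factor of $\delta$ at every newly visited index). The resulting bound has the form $(K\sqrt\delta)^{2p}n$ with $K$ absolute; taking $\delta$ small, letting $p$ grow with $n$, and applying Markov's inequality, one finds with positive probability a realization for which $|I\cap[1,n]|\ge\tfrac{\delta}{2}n$ for arbitrarily large $n$ while simultaneously $\|P_{I\cap[1,n]}NP_{I\cap[1,n]}\|\le c^2/2$. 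A standard diagonal extraction over $n$ (using compactness of $\{0,1\}^\N$ and the monotone-density property of nested truncations) then yields a fixed set $I\subset\N$ with $\overline\dens(I)\ge\delta/2$ and $\|P_INP_I\|\le c^2/2$, as required.

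The main obstacle, by a large margin, is the moment estimate for random principal submatrices of a zero-diagonal matrix. The combinatorial bookkeeping of the walks (or, equivalently, a decoupling/Rademacher-sum argument in its place) is the delicate heart of the Bourgain--Tzafriri technology; once one has the finite-dimensional lemma, the reduction above and the passage to an infinite $I$ are essentially routine.
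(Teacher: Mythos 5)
The paper does not prove this statement at all (it is quoted from \cite{BTz}), so your proposal has to stand on its own, and it has a genuine gap at exactly the step you defer as ``the delicate heart''. The reduction to finding a positive-upper-density $I$ with $\|P_INP_I\|\le c^2/2$ is fine, but the claimed moment bound $\operatorname{E}\operatorname{tr}\bigl((P_{I\cap[1,n]}NP_{I\cap[1,n]})^{2p}\bigr)\le(K\sqrt\delta)^{2p}n$ is false, and in fact no i.i.d.\ Bernoulli selection can work. Test it on $Te_{2k}=Te_{2k+1}=u_k$ with $\{u_k\}$ orthonormal (so $\|Te_j\|=1$, $\|T\|=\sqrt2$): after your normalization, $N=T^*T-D$ is block diagonal with $2\times2$ antidiagonal blocks whose entries are of order $1$. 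A closed walk of length $2p$ that just bounces back and forth inside one block visits only two indices, so it contributes a factor $\delta^2$ times an entry product of order $1$, independently of $p$; hence $\operatorname{E}\operatorname{tr}\bigl((P_INP_I)^{2p}\bigr)\asymp\delta^2n$, which is not $O\bigl((K\sqrt\delta)^{2p}n\bigr)$ once $p$ is large. The zero-diagonal saving you invoke accrues only at newly visited vertices and gives nothing for walks revisiting a single large off-diagonal entry, and the entries of $N$ can indeed be of order $1$. Worse, the plan itself (not just the bookkeeping) fails on this example: almost surely a Bernoulli-$\delta$ set contains a pair $\{2k,2k+1\}$, and $T(e_{2k}-e_{2k+1})=0$, so $T|H_I$ is not left invertible for almost every realization, even though the theorem's conclusion holds trivially there (take the even indices).

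This nearly-parallel-columns obstruction is precisely why the Bourgain--Tzafriri theorem is not ``routine after a moment estimate''. In \cite{BTz} random selection is only a preliminary step and must be followed by a factorization/combinatorial extraction (Grothendieck-type arguments) that removes the degenerate directions; the later proof cited as \cite{SS} in the paper is deterministic (a greedy, barrier-function argument) and avoids random selection altogether. Note also that the statement you reduce to --- for every zero-diagonal $N$ and every $\eps>0$ a proportional (or positive-upper-density) subset with $\|P_INP_I\|\le\eps\|N\|$ --- is itself a deep theorem of Bourgain and Tzafriri, not easier than restricted invertibility, so the reduction does not simplify the problem. Finally, your passage from finite $n$ to an infinite $I$ of positive upper density by diagonal extraction needs more care than stated, since pointwise limits of indicators do not preserve density (the selected finite sets can drift to the right); but that is secondary compared with the failure of the random-selection/moment step.
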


See also~\cite{SS} for a generalization and a simpler proof of a matrix version
of Bourgain--Tzafriri's Theorem.

The following conjecture often is quoted as Bourgain--Tzafriri's {\it
restricted invertibility conjecture}~({\bf RIC}) (it seems although that these
authors never actually stated this as a conjecture). It is known that the
famous Kadison--Singer conjecture on pure states on $C^*$-algebras
(see~\cite{KS}, \cite{CT}) implies {\bf RIC}: it is proved in~\cite{CCLV} that
if Kadison--Singer problem has a positive solution then the {\bf RIC} has as
well. For more details about these conjectures we refer to the papers mentioned
above, as well as to a WEB page~\cite{ARCC}. Both conjectures are still open
(June 2010).

\subsection{Restricted Invertibility Conjecture (RIC)}

\begin{conjecture}
For every bounded operator $T$ on a Hilbert space $H$ and every orthogonal
basis $\{e_j\}_{j\in\N}$ satisfying $\inf_j\frac{\|Te_j\|}{\|e_j\|}>0$\textup,
there exists a finite partition $\bigcup_{s=1}^r\!I_s=\N$ such that all
restrictions $T|H_{I_s}$ are left invertible.
\end{conjecture}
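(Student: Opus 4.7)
\medskip
\noindent\emph{Proof proposal.}
The plan is to reduce the conjecture to Anderson's paving conjecture, which (as the introduction records, following~\cite{CCLV}) is known to be equivalent to the Kadison--Singer problem. After a standard normalization replacing $e_j$ by $e_j/\|e_j\|$, we may assume $\{e_j\}$ is orthonormal and $\|T\|\le1$, and we set $\delta\df\inf_j\|Te_j\|>0$. Let $D$ be the diagonal part of $T^*T$ in the basis $\{e_j\}$; the hypothesis gives $D\succeq\delta^2 I$, while $N\df T^*T-D$ is self-adjoint, has zero diagonal in $\{e_j\}$, and satisfies $\|N\|\le2$.

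Anderson's paving conjecture, applied to $N/\|N\|$ with tolerance $\eps=\delta^2/4$, would yield an integer $r=r(\delta)$ and a finite partition $\N=\bigcup_{s=1}^{r}I_s$ with $\|P_{I_s}NP_{I_s}\|\le\delta^2/2$ for every $s$, where $P_{I_s}$ is the orthogonal projection onto $H_{I_s}=\spn\{e_j\colon j\in I_s\}$. For any $x\in H_{I_s}$ one would then compute
$$
\|Tx\|^2=\langle Dx,x\rangle+\langle P_{I_s}NP_{I_s}x,x\rangle\ge\delta^2\|x\|^2-\tfrac{\delta^2}{2}\|x\|^2=\tfrac{\delta^2}{2}\|x\|^2\,,
$$
so each restriction $T|H_{I_s}$ would be left invertible with lower bound at least $\delta/\sqrt2$, as required.

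The main obstacle is precisely the paving step: this is the Kadison--Singer problem itself, which is open as of the date of this paper, so any argument along these lines is strictly conditional. An unconditional attack starting from Bourgain--Tzafriri's theorem would have to iterate it on complements---given $I_1$ of positive upper density $d_1$ with $T|H_{I_1}$ left invertible, one reapplies the theorem to the restriction of $T$ to $H_{\N\setminus I_1}$ to extract $I_2$, and so on. The hereditary character of the hypothesis $\inf_j\|Te_j\|>0$ permits the iteration, but to terminate in finitely many steps one needs a uniform lower bound on the selected densities $d_k$ depending only on $\|T\|$ and $\delta$, not on the residual index set. Producing such a quantitative Bourgain--Tzafriri selector would in effect amount to solving Kadison--Singer, which places this conjecture at the same depth and, absent a breakthrough there, appears to be out of reach.
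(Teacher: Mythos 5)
You have not proved the statement, and neither does the paper: this is precisely the open Restricted Invertibility Conjecture, which the paper records as still unresolved (June 2010) and does not attempt to prove. The paper's actual contribution in Section~3 is in the opposite direction --- it constructs, via Theorem~\ref{counterexample} and the Blaschke product of Theorem~\ref{modif}, a \emph{counterexample} to the stronger variant {\bf SB-RIC} in which the orthogonal (or unconditional) basis is replaced by a summation block orthogonal basis. So the correct reading of the source is that no proof of the displayed conjecture exists there to compare against.

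Your proposal is internally honest about this: the paving step you invoke is Anderson's paving conjecture, equivalent to Kadison--Singer, and the implication ``Kadison--Singer $\Rightarrow$ {\bf RIC}'' is exactly the known result of~\cite{CCLV} that the paper already cites, so the conditional reduction you write out (splitting $T^*T=D+N$ with $D$ diagonal, $D\ge\delta^2I$, paving $N$ to norm at most $\delta^2/2$ on each block, and concluding $\|Tx\|^2\ge\tfrac{\delta^2}{2}\|x\|^2$ on each $H_{I_s}$) is a correct but standard argument that adds nothing beyond the literature and, crucially, does not discharge the hypothesis. The alternative you sketch --- iterating Bourgain--Tzafriri on complements --- fails for the reason you yourself identify: the theorem gives a set of positive upper density with no uniform quantitative control tied only to $\|T\|$ and $\delta$, and even a uniform density bound on each successive selection would not force the process to exhaust $\N$ in finitely many steps, since upper density of the residual sets need not shrink to zero in finitely many stages. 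In short, the gap is not a fixable technical slip but the full strength of the Kadison--Singer problem itself; as a submission claiming to prove the statement it must be rejected, though as an account of why the problem is hard and of the known conditional route it is accurate.
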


It is easy to see that the {\bf RIC} is equivalent to require the same quality
partitions for every bounded $T$ and every {\it unconditional} basis in $H$ (in
place of orthogonal ones). Knowing no much progress in this conjecture during
the last 20 years, we can try to approach the truth treating first some
stronger conjectures. Namely, we can replace here an ``unconditional basis'' by
a ``Schauder basis'', and even by a ``summation basis''. {\it We denote the
corresponding conjectures by {\bf B-RIC} and {\bf SB-RIC}, respectively}.

Precisely, a {\it summation basis} relative to a (triangular) matrix
$V=\{v_{nj}\}$ of scalars $v_{nj}$ is a sequence $\{e_j\}_{j\in\N}$ in $H$ such
that for every $x\in H$ there exists a unique sequence of scalars
$\{a_j\}_{j\in\N}$ satisfying $x=(V)\sum_{j\ge1}a_je_j$, which means the
following:
\begin{itemize}
\item{$v_{nj}=0$ for $j>n$;}
\item{$x=\lim_{n\to\infty}\sum_{j=1}^nv_{nj}a_je_j=x$ (norm convergence).}
\end{itemize}

Clearly, {\bf SB-RIC} $\implies$ {\bf B-RIC} $\implies$ {\bf RIC}. Here, we
present a counterexample to the {\bf SB-RIC}.

\subsection{Counterexample}
\begin{theorem}
\label{counterexample}
Given $\delta,$ $0<\delta<1,$ there exists a sequence
$\{e_j\}_{j\in\N}$ in a Hilbert space $H$ satisfying the following properties.
\begin{itemize}
\item[(1)] {$\{e_j\}_{j\in\N}$ is a summation basis \textup(relative to a triangular matrix\textup).}

\item[(2)] {$\{e_j\}_{j\in\N}$ is block orthogonal\textup: there exists an
increasing sequence of integers $n_s$ such that $H_{[n_s,n_{s+1})}\perp
H_{[n_t,n_{t+1})}$ for every $s\ne t,$ where $H_{[n_s,n_{s+1})}=\spn\{e_j\colon
n_s\le j<n_{s+1}\}$.}

\item[(3)] {There exists a bounded operator $A\colon H\to H$ satisfying
$\|A\|\le1,$ $Ae_j=\lambda_j(A)e_j,$
$\delta\le|\lambda_j(A)|=\frac{\|Ae_j\|}{\|e_j\|}\le\|A\|\le1$ $(j\in\N),$ and
such that for every finite partition $\bigcup_{s=1}^rI_s=\N$ there is a
restriction $A|H_{I_s}$ $(1\le s\le r),$ which is NOT left invertible.}

\item[(4)] {Every bounded operator $T\colon H\to H$ satisfying
$Te_j=\lambda_j(T)e_j$ \textup($j\in\N$\textup) and
$1\ge\|T\|\ge\inf_j|\lambda_j(T)|>\delta$ is invertible.}
\end{itemize}
\end{theorem}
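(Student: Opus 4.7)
Apply Theorem~\ref{modif} with $\delta_1:=\delta$ to obtain a Blaschke product $B$, with zero set $\sigma=\{\lambda_j\}$, and a function $f\in H^\infty$ satisfying $\|f\|_\infty\le1$, $|f(\lambda)|\ge\delta$ for every $\lambda\in\sigma$, and $1/f\notin H^\infty/BH^\infty$. Factor $B=\prod_{s\ge1}B_s$ as a product of finite Blaschke products inheriting the inductive block structure of that construction. By a normal-families compactness argument, the non-membership $1/f\notin H^\infty/BH^\infty$ forces
\[
\tau_s:=\bigl\|(1/f)\bigr\|_{H^\infty/B_sH^\infty}\longrightarrow\infty
\]
(otherwise a bounded subsequence of interpolants would furnish a bounded global representative of $1/f$). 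Form the external orthogonal direct sum $H:=\bigoplus_{s\ge1}K_{B_s}$ and the block-diagonal operator $A:=\bigoplus_{s\ge1}f(M_{B_s})^*$; by Sarason's commutant lifting $\|f(M_{B_s})^*\|=\|f\|_{H^\infty/B_sH^\infty}\le\|f\|_\infty\le1$, so $\|A\|\le1$. Take $\{e_j\}$ to be the concatenation, block by block, of the normalized reproducing kernels $\tilde k_\lambda^{K_{B_s}}:=k_\lambda^{K_{B_s}}/\|k_\lambda^{K_{B_s}}\|$ for $\lambda\in\sigma(B_s)$, and let $n_s$ demarcate the blocks. The identity $f(M_{B_s})^*k_\lambda^{K_{B_s}}=\overline{f(\lambda)}k_\lambda^{K_{B_s}}$ (valid for $\lambda\in\sigma(B_s)$) gives $Ae_j=\overline{f(\lambda_j)}e_j$ with $|\lambda_j(A)|=|f(\lambda_j)|\in[\delta,1]$.

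\textbf{The easy verifications.} Finite-dimensionality of each $K_{B_s}$ makes the concatenation a Schauder basis of $H$, hence a summation basis with triangular matrix $v_{nj}=\mathbf{1}_{j\le n}$, giving~(1); block orthogonality~(2) is built into the direct-sum construction. For~(4), any bounded $T$ with $Te_j=\lambda_j(T)e_j$ must respect the orthogonal block decomposition, so $T=\bigoplus_s g_s(M_{B_s})^*$ for interpolants $g_s\in H^\infty$ with $g_s(\lambda_j)=\lambda_j(T)$ on $\sigma(B_s)$ and $\|g_s\|_{H^\infty/B_sH^\infty}\le\|T\|$. A normal-families argument assembles the $g_s$ into a single $g\in H^\infty$ with $g|_\sigma=\{\lambda_j(T)\}$ and $\inf_j|g(\lambda_j)|>\delta=\delta_1$; Theorem~\ref{modif}(1) then produces $1/g\in H^\infty/BH^\infty$ with bounded norm, and since $BH^\infty\subset B_sH^\infty$ gives $\|(1/g)\|_{H^\infty/B_sH^\infty}\le\|(1/g)\|_{H^\infty/BH^\infty}$, the block inverses $T_s^{-1}=(1/g)(M_{B_s})^*$ are uniformly bounded, so $T$ is invertible on $H$.

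\textbf{The hard verification of~(3).} Since $\tau_s=\|A_s^{-1}\|\to\infty$, unit vectors $y_s\in K_{B_s}$ with $\|Ay_s\|=\|A_sy_s\|\le1/\tau_s$ already show $A$ is not bounded below on $H$. To deduce failure of left-invertibility of $A|H_{I_{r^*}}$ for some $r^*$ under every finite partition $\N=\bigcup_{r=1}^RI_r$, the Theorem~\ref{modif} construction must be tuned so that, for each fixed $R$, the near-kernel
\[
N_\eps(A_s):=\bigl\{v\in K_{B_s}\colon\|A_sv\|\le\eps\|v\|\bigr\}
\]
satisfies $\dim N_{\eps_s}(A_s)>(1-1/R)\dim K_{B_s}$ along a subsequence $s\to\infty$ with $\eps_s\to0$. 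This is arranged by packing, within each strip of the construction, many zeros of $B$ close in pseudohyperbolic metric to the corresponding zero $v_{n(s)}$ of $f$; the resulting quasi-parallel cluster of reproducing kernels in $K_{B_s}$ spans a high-dimensional subspace on which $f(M_{B_s})^*$ acts as a small perturbation, because the corresponding eigenvectors are nearly linearly dependent while $|f(\lambda_i)|\ge\delta$ controls the eigenvalues. Once this growth is secured, pigeonhole delivers $r^*$ with $|I_{r^*}\cap[n_s,n_{s+1})|\ge(\dim K_{B_s})/R$ for infinitely many $s$, so the subspace $V_{s,r^*}\subset H_{I_{r^*}}$ spanned by the corresponding $e_j$'s has codimension at most $(1-1/R)\dim K_{B_s}$ in $K_{B_s}$; a dimension count forces $V_{s,r^*}\cap N_{\eps_s}(A_s)\ne\{0\}$, producing unit vectors in $H_{I_{r^*}}$ on which $A$ shrinks to norm $\le\eps_s\to0$. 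The main obstacle is this delicate simultaneous choice of the zeros of $f$ and of the parameters $(\alpha_n,m_n,\rho_n)$ of the Theorem~\ref{modif} construction that both preserves $|f|\ge\delta_1$ on $\sigma(B)$ and forces the near-kernel of each $A_s$ to occupy almost all of $K_{B_s}$.
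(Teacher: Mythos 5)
The central difficulty of the theorem is item~(3), and this is exactly where your argument has a genuine gap, which you yourself flag as ``the main obstacle''. Your construction takes $A=\bigoplus_s f(M_{B_s})^*$ for a factorization $B=\prod_s B_s$ into blocks with pairwise disjoint zero sets, and then reduces the failure of left invertibility under \emph{every} finite partition to an unproved quantitative property: that the near-kernel $N_{\eps_s}(A_s)$ occupies a proportion of $\dim K_{B_s}$ exceeding $1-1/R$ for every $R$ along subsequences, i.e.\ a proportion tending to $1$. Nothing in Theorem~\ref{modif} provides this, and the heuristic (``pack many zeros of $B$ pseudohyperbolically close to $v_n$'') runs against the construction itself: the zeros of $B$ near $v_n$ sit at pseudohyperbolic distance about $\delta_1$ from $v_n$, which is not small, and all eigenvalues satisfy $|f(\lambda)|\ge\delta_1$, so no estimate forcing a high-dimensional near-kernel is given or evident. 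Two auxiliary steps are also unjustified as stated. First, $\tau_s\to\infty$ does not follow by normal families from $1/f\notin H^\infty/BH^\infty$: each interpolant $g_s$ represents $1/f$ only on the finite set $\sigma(B_s)$, and these sets escape to the boundary, so a locally uniform limit of a bounded subsequence interpolates nothing and yields no global representative of $1/f$. Second, in~(4) the ``assembly'' of the block interpolants $g_s$ into a single bounded $g$ with $g(z_j)=\lambda_j(T)$ on all of $\sigma$ fails for the same reason; moreover no such global interpolant is guaranteed to exist, since $\sigma$ is far from an interpolating sequence and, in your block-diagonal setting, $T$ does not commute with $M_B^*$, so commutant lifting for the full $B$ is unavailable. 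What~(4) would actually require in your setup is a uniform per-block corona bound $\inf_z\big(|g_s(z)|+|B_s(z)|\big)\ge\eta(\delta)$ independent of $s$, which you do not establish.

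The paper sidesteps all of this by never splitting $\sigma$ into blocks of distinct zeros. It works with the single model operator $T=g(M_B)^*$, where $g$ is the noninvertible function of Theorem~\ref{modif}, and first shows that no finite partition $\sigma=\bigcup_s\sigma_s$ can make every restriction $g(M_{B_s})^*$ left invertible: the Bezout relations $gf_s+B_sh_s=1$ multiply to give $\prod_s(1-gf_s)=1-gF$ with the left side divisible by $B$, i.e.\ $g$ would be invertible in $H^\infty/BH^\infty$, a contradiction. Block orthogonality is then obtained by taking $H=\bigoplus_{N\ge1}H_N$ with $H_N=\spn\{x_1,\dots,x_N\}$ the \emph{nested} truncations of one model space and $A=\bigoplus_N T|H_N$, so that every eigenvalue recurs in infinitely many blocks; given any partition of $\N$ allegedly defeating~(3), intersecting with the $N$-th block yields partitions of $\{1,\dots,N\}$, and a pigeonhole--diagonal argument extracts a single partition of $\sigma$ on which $T$ would be bounded below, contradicting the previous step. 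To repair your proof you would either have to adopt this repetition device, or genuinely prove both the near-kernel dimension growth for~(3) and the uniform block corona estimate for~(4); as written, neither is established.
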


\begin{proof}
We use our main construction from Theorem~\ref{modif} replacing the upper
half-plane by the unit disk. Namely, given $\delta$ ($\delta_1$ in the
Theorem), $0<\delta<1$, there exists a Blaschke sequence $\sigma=\{z_j\}$ of
distinct points in $\D$ such that

\begin{itemize}
\item[a)] {for every $f\in H^\infty$ with $\delta<\inf_j|f(z_j)|$ and
$\|f\|_\infty\le1$, we have $\frac1f\in H^\infty|\sigma$;}

\item[b)] {there is an $H^\infty$ function $g$ such that $\delta\le|
g(z_j)|\le\|g\|_\infty\le1$ but $\frac1g\not\in H^\infty|\sigma $.}
\end{itemize}

Now, we interpret~a) and~b) in terms of the model operator $M_B^*$ and the
reproducing kernels $x_j=\frac{(1-|z_j|^2)^{1/2}}{1-\bar z_jz}$.

First, by (a scalar version of) the commutant lifting theorem, an operator $T$
from point~(4) of the Theorem is of the form $T=f(M_B)^*$, where $f$ satisfies
all properties from~a): $f\in H^\infty$, $\delta <\inf_j|f(z_j)|$ and
$\|f\|_\infty\le1$. Hence, $\frac1f\in H^\infty|\sigma$, which means that $T$
is invertible (and proves point~(4)).

Secondly, in order to fix statements~(1)--(3), we restate item~b) above in
terms of the same model operator. Namely, for an operator
$$
T=g(M_B)^*
$$
with a function $g$ from~b) we have $\|T\|\le1$, $Tx_j=\lambda_j(T)x_j$,
$\delta\le|\lambda_j(T)|=|g(z_j)|\le\|T\|\le1$ ($j\in\N$), and
$\inf\{\|Tx\|\colon x\in K_B,\|x\|=1\}=0$.

Notice that if we would like to restrict ourselves to properties~(1) and~(3)
only, we simply set $A=T=f(M_B)^*$. Property~(1) follows from the fact that the
sequence $\{x_j\}_{j\in\N}$ corresponding to a Blaschke sequence
$\{z_j\}_{j\in\N}$ is a summation basis,~\cite{Nik1}, p.~194. In order to
check~(3), suppose that there exists a partition $\bigcup_{s=1}^r\sigma_s=\N$
such that all restrictions $T|H_{\sigma_s}$ are left invertible:
$$
0<\inf\{\|Tx\|\colon x\in H_{\sigma_s}, \|x\|=1\}
$$
for every $s$, $1\le s\le r$. We lead this to contradiction as follows. Let
$B_s$ be the Blaschke product whose zero sequence is $\sigma_s$. Since the
restriction $T|H_{\sigma_s}=g(M_{B_s})^*$ is, in fact, invertible, there exist
functions $f_s,h_s\in H^\infty$ such that $gf_s+B_sh_s=1$. Hence,
$B\cdot\prod_{s=1}^rh_s=\prod_{s=1}^r(1-gf_s)=1-gF$, where $ F\in H^\infty$.
This shows that the operator $T=g(M_B)^*$ is invertible, what contradicts the
construction of $T$. Therefore, a counterexample satisfying properties (1),
(3), and~(4) of the Theorem is constructed.

In order to satisfy property~(2), we modify the previous construction in the
following way. Let $N\in\N$ and $T_N=T|H_N$ be the restriction of $T$ to
$$
H_N=\spn\{x_j\colon1\le j\le N\}\,.
$$
Then
\begin{gather*}
\|T_N\|\le1,\quad T_Nx_j=\lambda_j(T_N)x_j\,,
\\
\delta\le|\lambda_j(T_N)|\le \|T\|\le1\quad (1\le j\le N)\,,
\\
\intertext{and} \lim_{N\to\infty}\inf\{\|T_Nx\|\colon x\in H_N,\|x\|=1\}=0\,.
\end{gather*}
Now, we set
$$
A=\sum_{N\ge1}\oplus T_N\,,
$$
which is defined coordinate-wise on an ($l^2$) orthogonal sum
$$
H=\sum_{N\ge1}\oplus H_N\,.
$$
In particular, this means that the point spectrum of $A$ is
$\{\lambda_j(T)\}_{j\ge1}$ but each eigenvalues is repeated infinitely many
times.

Next, we denote $\{e_j\}_{j\in\N}$ the sequence of eigenvectors of $A$ ordered
naturally: if $f_k=(\delta_{N,k})_{N\ge1}\in l^2$, then
$$
\{e_j\}_{j\in\N}=(x_1f_1,x_1f_2,x_2f_2,\dots,x_1f_N,x_2f_N, \dots,x_Nf_N,x_1f_{N+1},\dots)\,,
$$
or, more formally,
$$
e_j=x_mf_N\,,\qquad\text{where}\quad N=[\sqrt{2j}+\frac12],\quad m=j-\frac{N(N-1)}2\,.
$$

Show that $\{e_j\}_{j\in\N}$ satisfies properties~(1) and~(2), and $A$ fulfils all requirements of~(3).

Indeed, properties~(1) and~(2) for $\{e_j\}$ easily follow from the
property~(1) for $\{x_j\}$ and a block orthogonal nature of $\{e_j\}$.

In order to prove~(3), suppose the contrary, i.~e., that there exists a finite
partition $\bigcup_{s=1}^rI_s=\N$ such that all restrictions $A|H_{I_s}$ ($1\le
s\le r$) are left invertible. Taking an intersection of $\bigcup_{s=1}^rI_s=\N$
with the \hbox{$N$-th} group of indices corresponding to the eigenfunctions
$\{x_mf_N\}_{1\le m\le N}$, we obtain a partition $\bigcup_{s=1}^rI_{s,N}=I^N$
of the set $I^N=\{1,2,\dots,N\}$, where index $m$ runs. Reasoning by induction,
assume we have an infinite subsequence $\{N_i\}$ of $\N$ such that for a given
$N$ all partitions $\bigcup_{s=1}^r(I_{s,N_i}\bigcap I^N)=I^N$, $i\ge1$, are
the same. Since there is only a finite number of partitions of $I^{N+1}$, we
can choose an infinite subsequence of $\{N_i\}$, say $\{N'_l\}$, such that all
partitions $\bigcup_{s=1}^r(I_{s,N'_l}\bigcap I^{N+1})=I^{N+1}$, $l\ge1$, are
the same. Applying a diagonal process to this table of sequences, we obtain a
growing sequence of integers $\{M_i\}_{i\ge1}$ such that all partitions
$\bigcup_{s=1}^r(I_{s,M_i}\bigcap I^N)=I^N$, $i\ge1$, are the same, for all
$N=1,2,\dots$. This means that we have a partition
$\bigcup_{s=1}^r\sigma_s=\N$, $I_{s,M_i}\bigcap I^N=\sigma_s\bigcap I^N$. Next,
we observe that, for every $s$, $1\le s\le r$,
\begin{align*}
0<\delta:=&\inf\{\|Ax\|\colon x\in H_{I_s},\|x\|=1\}
\\
\le&\inf\{\|T_Nf\|\colon f\in H_{I_{s,M_i}\cap I^N},\|f\|=1\}
\end{align*}
for every $N\ge1$. Taking $N\to\infty$, we get
$$
0<\delta\le\inf\{\|Tf\|\colon f\in H_{\sigma_s},\|f\|=1\}
$$
for every $s$, $1\le s\le r$.

But, as we saw above, this is impossible.
\end{proof}


\begin{thebibliography}{XXXX}

\bibitem[ARCC]{ARCC}
{\em ARCC Workshop: The Kadison--Singer Problem}, www.aimath.org/WWN/
kadisonsinger/

\bibitem[BTz]{BTz}
{\sc J.~Bourgain and L.~Tzafriri}, {\em Invertibility of ``large'' submatrices
with applications to the geometry of Banach spaces and harmonic analysis},
Israel J.~Math., {\bf57} (1987), no.~2, 137--224.

\bibitem[CCLV]{CCLV}
{\sc P.~G.~Casazza, O.~Christensen, A.~M.~Lindner, and R.~Vershynin}, {\em
Frames and the Feichtinger conjecture}, Proc. Amer. Math. Soc., {\bf133}
(2005), 1025--1033.

\bibitem[CT]{CT}
{\sc P.~G.~Casazza and J.~C.~Tremain}, {\em The Kadison--Singer problem in
mathematics and engineering}, Proc. Nat. Acad. Sci., {\bf103} (2006),
2032--2039.

\bibitem[Gar]{Gar}
{\sc J.~B.~Garnett}, {\em Bounded Analytic Functions}, Academic Press, N.Y., 1981.

\bibitem[GMN]{GMN}
{\sc P.~Gorkin, R.~Mortini, and N.~Nikolski}, {\em Norm controlled inversions
and a corona theorem for $H^\infty$-quotient algebras}, J.~Funct. Anal.,
{\bf255} (2008), 854--876.

\bibitem[KS]{KS}
{\sc R.~Kadison and I.~Singer}, {\em Extensions of pure states}, Amer.
J.~Math., {\bf81} (1959), 383--400.

\bibitem[Nik1]{Nik1}
{\sc N.~Nikolski}, {\em Treatise on the shift operator}, Springer-Verlag,
Berlin etc., 1986.

\bibitem[Nik2]{Nik2}
{\sc N.~Nikolski}, {\em Operators, Functions, and Systems}, Vol.1\&2, AMS,
Providence, 2002.

\bibitem[SS]{SS}
{\sc D.~A.~Spielman and N.~Srivastava}, {\em An elementary proof of the
restricted invertibility theorem}, arXiv:0911.1114v3 (February 2, 2010).

\end{thebibliography}
\end{document}